\documentclass[11pt]{amsproc}
\usepackage{latexsym,fancyhdr} 
\usepackage{amsmath,amssymb,xcolor}   
\usepackage{multicol}
\usepackage{mathtools}
\usepackage{times}
\usepackage{pb-diagram} 
\usepackage[all,cmtip]{xy}
\usepackage{caption}
\usepackage{mathabx}
 \usepackage[toc,page]{appendix}

\usepackage{hyperref}

 \usepackage{amsmath, amssymb, latexsym, tikz}
\usepackage{caption,ifthen,url,fancyhdr,cite}
\usepackage[foot]{amsaddr}
\usepackage{libertine}
\usepackage{longtable}
\usepackage{upquote}
\usepackage{listings}
\usepackage{textcomp}
\usepackage{adjustbox}

\usepackage{environ}
 \usepackage{fancyvrb,xcolor,fbox}
 \newcommand{\mygp}{{gap>}}

\newcommand{\mygpo}[1]{#1}

\usepackage{mathtools}
\usepackage{bm}
\usepackage{bbm}
\usepackage{xcolor}
\usepackage{multicol}

\usepackage{algorithm}
\usepackage[noend]{algpseudocode}

\newtheorem{Tma}{Theorem}[section]

\newtheorem{lemma}[Tma]{Lemma}

\newtheorem{proposition}[Tma]{Proposition}

\newtheorem{theorem}[Tma]{Theorem}
\theoremstyle{definition}
\newtheorem{definition}[Tma]{Definition}
\newtheorem{example}[Tma]{Example}

\newtheorem{remark}[Tma]{Remark}

\newcommand{\Z}{\mathbb{Z}}

\newcommand{\Aut}{{{\text{\rm{Aut}}}}}
\newcommand{\Comp}{{{\text{\rm{CP}}}}}

\newcommand{\can}{{\rm can}}
\newcommand{\tail}{{\rm tail}}

\newcommand{\doi}[1]{\href{https://doi.org/#1}{DOI: #1}}
\newcommand{\httpsurl}[1]{\href{https://#1}{\nolinkurl{#1}}}

\newcounter{ithmcount}
\newenvironment{iprf}{\begin{list}{{\rm
    \alph{ithmcount})}}{\usecounter{ithmcount}\labelwidth-5pt
      \leftmargin0pt \topsep3pt \itemsep1pt \parsep2pt}}{\end{list}}

\newenvironment{ithm}{\begin{list}{{\rm \alph{ithmcount})}}{\usecounter{ithmcount}\labelwidth18pt
      \leftmargin18
pt \topsep3pt \itemsep1pt \parsep2pt}}{\end{list}}

\newenvironment{items}{
\begin{list}{$\alph{item})$}
{\labelwidth18pt \leftmargin20pt \topsep3pt \itemsep5pt \parsep0pt}}
{\end{list}}

\title{Computing canonical labellings of finite solvable groups}

\author{Santiago Barrera Acevedo}
\address{La Trobe University, Department of Mathematical and Physical Sciences, Bundoora, VIC, Australia}
\email{S.BarreraAcevedo@latrobe.edu.au}

\author{Heiko Dietrich}
\address{Monash University, School of Mathematics, Clayton, VIC, Australia}
\email{heiko.dietrich@monash.edu}

\author{Max Horn}
\address{RPTU University Kaiserslautern-Landau, Germany}
\email{mhorn@rptu.de}

\subjclass{20F16; 20-08; 20-04}
\keywords{canonical labelling; solvable groups; canonical presentations; group isomorphism}
\thanks{The authors thank Bettina Eick, Derek Holt, and Eamonn A.\ O'Brien for comments. Dietrich was supported by a Universities Australia Grant in the Australia-Germany Joint Research Cooperation Scheme (\emph{The SmallGroups Library: New Classifications via Symbolic Computation}).}

\renewcommand{\leq}{\leqslant}

\begin{document}


\begin{abstract}
We define a canonical labelling function on the class of finite solvable groups so that two such groups $G$ and $\tilde G$  are isomorphic if and only if $\can(G)=\can(\tilde G)$. Specifically, $\can(G)$ is a group presentation that describes a group isomorphic to $G$, and our description explains how to construct an isomorphism $G\to \can(G)$.  Our approach is motivated by O'Brien's (1993) canonical presentations for finite $p$-groups and utilises ideas from group cohomology first described by Robinson (1982) and automorphism group algorithms developed by Smith (1994), Holt (2001), and others. We also discuss a proof-of-concept implementation for the computer algebra system GAP and comment on the major bottlenecks and open research questions.
\end{abstract}

\maketitle

\section{Introduction}\label{sec_intro}
A \emph{canonical labelling}  is a way of assigning a \emph{label}  to a mathematical object such that two objects receive the same label if and only if they are equivalent in some sense. For example, two complex square matrices are \emph{similar} if and only if they have the same Jordan Normal Form; two finite dimensional semisimple Lie algebras over the complex numbers are \emph{isomorphic} if and only if they have the same Dynkin diagram; two permutations are \emph{conjugate} in a symmetric group if and only if they have the same cycle type; and two connected, orientable, and compact surfaces are \emph{homeomorphic} if and only if they have the same genus.

Canonical labellings are important because they reduce the often difficult problem of checking equivalence to the simpler task of comparing labels for equality: Instead of asking whether two objects are equivalent, one computes canonical labels and asks whether these are equal. Canonical labellings of finite graphs are an established application that plays a crucial role in contemporary algorithms for solving the graph isomorphism problem, see \cite{nauty}. Building on these ideas, recent work of  Jefferson et al.\ \cite{orbit} describes a  framework for deciding the more general problem of whether two elements lie in the same orbit under the action of a permutation group. For a more general discussion of canonisation algorithms for combinatorial objects, we refer to Schweitzer and Wiebking \cite{schweitzer}. Applied to groups, Wiebking \cite{wiebking}  later demonstrated a canonical labelling that can be used to decide permutational isomorphisms.

 In this paper, we focus on canonical labellings for finite solvable groups and their practical applications in computational group theory, in particular, implementations for computer algebra systems. Since canonical labellings for group isomorphism would solve the group isomorphism problem, which is a famous and notoriously difficult problem (see \cite{DietrichWilson} and the references therein), one should not expect such a labelling to work effectively for all finite groups.
 Focus is therefore on special classes of groups; we give some examples. For the class of finite cyclic groups, the order of a group is a suitable canonical label. For the class of finite abelian groups, the \emph{abelian invariants} of a group can be used as  a canonical label; see \cite[Theorem 9.13]{HoltEick}. For the class of finite groups of size at most 2000 (except 1024), an example that is particularly relevant in practice is the ``IdSmallGroup'' label provided by the SmallGroups Library, see Besche et al.\ \cite{besche} and the implementations in GAP \cite{gap} and Magma \cite{magma}: If $G$ is a group of size $n$ at most $2000$ (not $1024$), IdSmallGroup$(G)=[n,i]$ if and only if $G$ is isomorphic to the $i$-th group in the SmallGroups Library list  of groups of size $n$.  This  illustrates the earlier observation that canonical labels are useful for reducing a list $G_1,\ldots,G_m$ of groups up to isomorphism: rather than performing up to $m(m-1)/2$ \emph{potentially expensive} isomorphism tests $G_i\cong G_j$, one computes the canonical labels $\ell_1,\ldots,\ell_m$ of the groups and performs up to $m(m-1)/2$ much cheaper equality checks $\ell_i=\ell_j$. This can be done even more efficiently if the labels can be sorted, for example, when they are integers.

 The examples of group labellings mentioned so far have in common that the labelling is determined by exhibiting sufficiently many isomorphism invariants to distinguish the groups. In particular, this approach does not produce explicit isomorphisms, which are often useful in practice. One noteworthy exception, and the main motivation for the present work, is provided by O'Brien's \cite{Obrien93} canonical presentations for finite $p$-groups resulting from the $p$-group generation algorithm; see also \cite[Section~9.4]{HoltEick}. O'Brien has described an algorithm that takes as input a group $G$ of prime power order and constructs a canonical group presentation $\can(G)$ and an isomorphism $G\to\can(G)$ such that two groups $G$ and $\tilde G$ of prime power order are isomorphic if and only if $\can(G)=\can(\tilde G)$, that is, if and only if the two presentations are \emph{equal} when considered as strings of symbols. For example, $\can(G)=\langle g_1,\ldots,g_r \mid g_1^p,\ldots,g_r^p, \; g_j^{g_i}=g_j\; (i<j)\rangle$ is one way to define a canonical presentation for an elementary abelian $p$-group $G$ of rank $r$. Since every finite nilpotent group is the direct product of groups of prime power order, the canonical presentations constructed in \cite{Obrien93} can easily be extended to canonical presentations for finite nilpotent groups.

\subsection{Main results}
The aim of this work is to describe and compute canonical labellings for finite solvable groups; denote this class of groups by $\mathcal{S}$. The usual way to describe a finite solvable group in computational group theory is via a consistent polycyclic group presentation (see Section \ref{sec_fsg}), and in practice we often assume that every group in $\mathcal{S}$ is given via such a presentation. This is not a serious restriction because effective methods exist to compute such a presentation for a finite solvable group $G$, assuming one can work effectively with $G$ to begin with.

Our main results are the following; let $G,\tilde G\in\mathcal{S}$ be finite solvable groups.
\begin{items}
\item[(A)] We describe a canonical presentation $\can(G)$ and prove that  $G\cong \tilde G$ if and only if $\can(G)=\can(\tilde G)$; see  Definition \ref{def_canpres}  and Theorem~\ref{thm_main}.
\item[(B)] We describe the construction of an isomorphism $G\to\can(G)$; see Section~\ref{sec_compiso}.
\item[(C)] We introduce our GAP package {\small\tt CanonicalPcPres} \cite{BADH}; see Section \ref{sec_comp}.
\end{items}

Our GAP package can be considered a proof-of-concept implementation of our algorithm, and while it works well for many examples, there are serious bottlenecks that are all related to difficult orbit calculations. We comment on these bottlenecks and opportunities for future research in Section \ref{sec_bottlenecks}.

We note that the class of finite solvable groups is an important class of groups. It contains  \emph{many} groups, for example, every group of odd order is solvable by the famous Odd-Order Theorem. It is also a class of groups that allows effective computations, see \cite[Chapter 8]{HoltEick} and  Section \ref{sec_fsg}. Lastly, similar to finite $p$-groups, every finite solvable group can be described by iterated group extensions by elementary abelian modules; this structure is the basis for our inductive approach, see Section~\ref{sec_chars}. The definition of canonical labellings for non-solvable groups will require different methods, cf.\ Hulpke's \cite{hulpke} work on perfect groups.

In Remark \ref{rem_pceasy} we explain that it is in theory easy to define \emph{some} canonical presentation for a finite solvable group, but the naive definition given there is difficult to compute and not practical. Our work is concerned with a more practical approach to constructing a canonical presentation together with an isomorphism.

\enlargethispage{1cm}

\subsection{General approach and structure of the paper}
The definition and construction of our canonical presentation is rather technical, which is why this section provides a top-level description of our approach; to keep this concise, we refer to Section \ref{sec_background} for all unexplained definitions.

Let $G$ be a finite solvable group. We assume that we can compute with $G$; this holds, for example, if $G$ is given by a permutation representation, polycyclic presentation, or reasonably small matrix representation. We first construct a certain characteristic subgroup series  $G=G_1>G_2>\ldots> G_{c+1}=\{1\}$ such that each section $G_i/G_{i+1}$ is elementary abelian. Our approach to construct $\can(G)$ is to iteratively construct epimorphisms $G\to \can(G/G_{i+1})$. The starting point for this iteration is the elementary abelian group $G_1/G_2$, for which a canonical presentation is defined in the introduction. Constructing $G_1/G_2$ and an epimorphism $G\to \can(G_1/G_2)$ can be achieved by the usual abelianisation algorithm via computing abelian invariants. For the next step, we describe $\can(G/G_{i+1})$ as a \emph{canonical} extension of $\can(G/G_i)$ by a module isomorphic to $G_i/G_{i+1}$. Similar ideas have been used by Smith \cite{SmithPhD}, Holt \cite{Holt01}, Cannon and Holt \cite{cannon}, and Howden \cite{howden} to describe algorithms for computing the automorphism group of a finite (solvable) group, see also \cite[Section~8.9]{HoltEick}. Some of these ideas date back to work of Robinson \cite{Robinson82} on group cohomology.

We provide some more details on this iteration. Assume an epimorphism from $G$ to $\can(G/G_i)$ has already  been constructed. This epimorphism induces an isomorphism $\pi\colon G/G_{i}\to \can(G/G_i)$. Since $G/G_{i+1}$ is an extension of $G/G_i$ by $G_i/G_{i+1}$, this allows us to describe $G/G_{i+1}$ as an extension of $\can(G/G_i)$ by $G_i/G_{i+1}$. Recall that $\can(G/G_i)$ is given by a canonical polycyclic presentation, and it turns out that the extension we seek can be described by a certain polycyclic presentation that \emph{extends} the one of $\can(G/G_i)$; we recall these details in Section~\ref{epcg}. To make this presentation canonical, we have to solve two problems: first, we need to choose a canonical action of $\can(G/G_i)$ on the module $G_i/G_{i+1}$; this is described in Section \ref{sec_module}. Once the module structure is fixed, we need to choose a canonical $2$-cocycle that describes the extension; this is discussed in Section \ref{sec_cps}. Throughout, ``canonical'' can be interpreted as ``deterministic'' such that the presentation $\can(G/G_i)$ depends only on the isomorphism type of $G$ for each $i$. Determining the canonical module structure and the canonical $2$-cocycle requires orbit calculations, which constitute the main practical bottleneck in our approach. We describe  these details in the main text.

The structure of the paper is as follows. In Section \ref{sec_background} we discuss the required technical background, including details on finite polycyclic groups and their polycyclic extensions. In Sections \ref{sec_solv} and \ref{sec_compiso}, we describe  $\can(G)$ and the construction of an isomorphism $G\to\can(G)$, respectively. Section \ref{sec_bottlenecks} discusses the main bottlenecks of our implementation and outlines ideas for future work. We conclude in Section \ref{sec_comp} with a brief discussion of our implementation.

\section{Technical background}\label{sec_background}
We recall the required information on finite solvable groups.  Most of this material  can be found in the Handbook of Computational Group Theory \cite{HoltEick}.

\subsection{Finite solvable groups}\label{sec_fsg}
A finite solvable group $G$ is polycyclic, that is, it admits a subnormal series \[G=G_1\geq G_2\geq \ldots \geq G_{n+1}=\{1\}\] such that each section $G_i/G_{i+1}$ is a finite cyclic group. A corresponding polycyclic generating set is a sequence $S=(g_1,\ldots,g_n)$ of elements in $G$ such that each $G_i/G_{i+1}=\langle g_iG_{i+1}\rangle$; the  relative orders are $(r_1,\ldots,r_n)$ where each $r_i=|G_i/G_{i+1}|$. Every $g\in G$ can be uniquely written in the form $g=g_1^{e_1}\ldots g_n^{e_n}$ where each $e_i\in \{0,\ldots,r_i-1\}$; this is the normal form of $g$ with respect to $S$. By construction, $g_k^{r_k}=w_k(g_{k+1},\ldots,g_n)$ and $g_j^{g_i}=w_{i,j}(g_{i+1},\ldots,g_n)$ for each $i,j,k$ with $i<j$, where $w_k$ and $w_{i,j}$ are normalised words in the given arguments. It follows that $G$ is isomorphic to the group given by the polycyclic presentation
\begin{eqnarray}\label{eq_pcp}\{g_1,\ldots,g_n \mid g_k^{r_k}=w_k\; (k=1,\ldots,n),\; g_j^{g_i}=w_{i,j}\; (1\leq i<j\leq n)\}.
\end{eqnarray}
By slight abuse of notation, we identify a group presentation with the group that it defines. We refer to \cite[Section 8]{HoltEick} for more details on polycyclic groups.
\enlargethispage{0.5cm}

\begin{remark}\label{rem_pceasy}
Every finite solvable group $G$ of order $p_1^{e_1}\ldots p_n^{e_n}$ (prime-power factorisation) has a polycyclic series whose sections are all of prime order, hence it can be represented by a polycyclic presentation with $n=e_1+\ldots+e_n$ generators. There are finitely many such presentations on generators $g_1,\ldots,g_n$,  and one could define $\can(G)$ to be that presentation that is \emph{minimal} among all such polycyclic presentations with respect to some ordering on the strings that represent these presentations. This approach obviously does not yield a practical algorithm.
\end{remark}

\subsection{Extensions of polycyclic groups}\label{epcg}
Let $G$ be a finite solvable group defined by the polycyclic presentation \eqref{eq_pcp}. Let $M$ be a $G$-module, given by the polycyclic presentation with polycyclic generators $(h_1,\ldots,h_m)$ and relative orders $(s_1,\ldots,s_m)$:
\begin{equation}\label{eq_pcpmod}
  \begin{aligned}M&=\langle \;h_1,\ldots,h_m \mid h_k^{s_k}=u_k\; (k=1,\ldots,m),\;\\ &\hspace*{2.87cm} h_j^{h_i}=h_j\; (1\leq i<j\leq m)\;\rangle,
  \end{aligned}
\end{equation}
where each $u_k=u_k(h_{k+1},\ldots,h_m)$ is a normal form. The $G$-action on $M$ is specified by normal forms $a_{i,j}=a_{i,j}(h_1,\ldots,h_m)$ in $M$ such that $h_j^{g_i}=a_{i,j}$ for all $g_i$ and $h_j$.

Let $E$ be an extension of $G$ by $M$, that is, a group that has a normal subgroup isomorphic to $M$ with quotient group $G$. We identify $M$ with a normal subgroup of $E$ and $G=E/M$ via an epimorphism $\pi\colon E\to G$ with kernel $M$. Consider a transversal $\tau\colon G\to E$, that is, a map such that $\pi(\tau(g))=g$ for all $g\in G$; we assume that $\tau$ is normalised, that is, $\tau(1)=1$. Every element in $E$ can be written as $\tau(g)a$ for unique $g\in G$ and $a\in M$. The transversal $\tau$ yields  a map \[\gamma\colon G\times G\to M,\quad (g,h)\mapsto \tau(gh)^{-1}\tau(g)\tau(h).\]It is well-known that $\gamma$ is a $2$-cocycle in $Z^2(G,M)$ and that $E$ is isomorphic to the group $E(\gamma)$ defined as
\[E(\gamma)=\{(g,a) \mid g\in G, a\in M\}\]
with multiplication $(g,a)(h,b)=(gh,a^hb\gamma(g,h))$. Specifically, an isomorphism $E\to E(\gamma)$ can be defined  by $\tau(g)a\mapsto(g,a)$; see for example \cite[Proposition 5.8]{DietrichEick}. This isomorphism also shows that $E$ and $E(\gamma)$ are in fact \emph{equivalent extensions} of $G$ by $M$; see  \cite[p.\ 77]{DietrichEick}.

It is clear that $E(\gamma)$ is polycyclic with polycyclic generators \[((g_1,1),\ldots,(g_n,1),(1,h_1),\ldots,(1,h_m))\] and relative orders $(r_1,\ldots,r_n,s_1,\ldots,s_m)$. For every $g_k$ there is a normal form $t_k=t_k(h_1,\ldots,h_m)$ such that $(g_k,1)^{r_k}=(w_k,t_k)$; similarly, for $i<j$ there exists a normal form $t_{i,j}=t_{i,j}(h_1,\ldots,h_m)$ such that $(g_j,1)^{(g_i,1)}=(w_{i,j},t_{i,j})$. We collect all these words $t_k$ and $t_{i,j}$ in a sorted list $t=\tail(\gamma)$, called the \emph{tail vector} defined by $\gamma$.

By slight abuse of notation,  $E(\gamma)$ is isomorphic to the group defined by the polycyclic presentation with polycyclic generating set $(g_1,\ldots,g_n,h_1,\ldots,h_m)$, relative orders $(r_1,\ldots,r_n,s_1,\ldots,s_m)$, and relations
\begin{equation}\label{relators}
\begin{array}{rllclll}
  g_k^{r_k}&=w_k t_k & (k=1,\ldots,n), \\[1ex]
  g_j^{g_i}&=w_{i,j} t_{i,j}& (1\leq i<j\leq n),\\[1ex]
  h_k^{s_k}& =u_k& (k=1,\ldots, m), \\[1ex]     h_j^{h_i}&=h_j& (1\leq i<j\leq m), \\[1ex]
  h_j^{g_i}& = a_{i,j}& (i=1,\ldots,n; j=1,\ldots,m).
\end{array}
\end{equation}
We denote this presentation by
\begin{eqnarray}\label{eq_etail}E(t)=E(\tail(\gamma)).\end{eqnarray}There is a natural isomorphism $E(\gamma)\to E(\tail(\gamma))$ that maps the generators of $E(\gamma)$ to  the generators of $E(\tail(\gamma))$; for details we refer to \cite[Section 8.7.2]{HoltEick}.

The map $\gamma\mapsto \tail(\gamma)$ is a group homomorphism from $Z^2(H,M)$ into $M^{(n^2+n)/2}$; we denote its image by $\mathcal{Z}(G,M)$. If $\gamma$ lies in the kernel of this homomorphism, then $E(\gamma)=G\ltimes M$ is a split extension, so $\gamma\in B^2(G,M)$ is a $2$-coboundary. It follows that the $2$-cohomology group $H^2(G,M)=Z^2(G,M)/B^2(G,M)$ is isomorphic to $\mathcal{Z}(G,M)/\mathcal{B}(G,M)$, where $\mathcal{B}(G,M)$ is the group generated by $\tail(\gamma)$ with $\gamma\in B^2(G,M)$, see \cite[Lemma 8.47]{HoltEick}.

It is important for our purposes that  $\mathcal{Z}(G,M)$ and $\mathcal{B}(G,M)$ can be computed without knowledge of $Z^2(G,M)$ and $B^2(G,M)$, see \cite[Section 8.7.2]{HoltEick}.

\enlargethispage{0.8cm}
\subsection{Characteristic series}\label{sec_chars}
Our iterative construction of $\can(G)$ requires us to use  a suitable subgroup series of $G$. Such a series is usually constructed by an iterative refinement, for example, starting with the derived series of $G$; see Example \ref{ex_solv}.  We denote the process of obtaining such a series a \emph{canonical filtration}.
\begin{definition}\label{def_css}A \emph{canonical filtration} is a process that takes as input a finite solvable group $G$ and outputs a subnormal subgroup series
  \[G=C_1(G)\geq C_2(G) \geq \ldots\geq C_{c+1}(G)=\{1\},\]
  where $c=c(G)$ depends on $G$, with the following properties:
  \begin{ithm}
   \item[(a)] the section $C_1(G)/C_2(G)$ is nilpotent, and each section $C_i(G)/C_{i+1}(G)$, for $i\geq 2$, is elementary abelian of prime-power order;
    \item[(b)] if $\tilde G$ is isomorphic to $G$, then $c(\tilde G)=c(G)$, and every isomorphism \mbox{$\varphi\colon G\to \tilde G$}  induces isomorphisms $C_i(G)\to C_i(\tilde G)$ for all $i$.
  \end{ithm}
 For a fixed filtration, the resulting subgroup series for $G$ is called the \emph{canonical characteristic subgroup series for $G$}.
\end{definition}
It follows from property (b) that each $C_i(G)$ is characteristic in $G$.

\begin{example}\label{ex_p}
  For a finite $p$-group $G$, the lower exponent-$p$ series is a canonical characteristic subgroup series for $G$. It is defined by  $C_1(G)=G$ and $C_{i+1}(G)=[G,C_i(G)]C_i(G)^{[p]}$ for $i\geq 1$, where $C_i(G)^{[p]}$ is the subgroup of $C_i(G)$ generated by all $p$-th powers and $[G,C_i(G)]$ is the subgroup generated by all commutators $[a,b]=a^{-1}b^{-1}ab$ with $a\in G$ and $b\in C_i(G)$; see \cite[p.\ 355]{HoltEick}. This series is the fastest descending series of $G$ with elementary abelian sections; it satisfies $C_i(G/C_j(G))=C_i(G)/C_j(G)$ for all $i\leq j$.
\end{example}

\begin{example}\label{ex_solv}
  For a finite solvable group $G$ that is not a $p$-group, we construct a canonical filtration as follows. We start with the derived series of $G$. We leave the first quotient as it is; this is the abelianisation $G/G'$, and the construction of the abelian invariants of $G$ allows us to define $\can(G/G')$ and construct an epimorphism $G\to \can(G/G')$. We refine every other section $S$  such that new subsequent sections are isomorphic to the Sylow subgroups of $S$ in ascending order of the primes. This yields a characteristic series where all the sections are groups of prime power order. Each of these $p$-group sections is now refined via their lower exponent-$p$ series, see Example \ref{ex_p}. This construction yields a canonical filtration. Modifications exist, for example, we can refine $G/G'$ further to arrange that every section in the series is elementary abelian. Alternatively, we can also impose that $C_2(G)$ is the nilpotent residual of $G$, that is, the unique smallest normal subgroup of $G$ with nilpotent quotient $G/C_2(G)$; this is useful in practice, see Section \ref{sec_comp}.   Another practical approach is to describe $G$ by a \emph{special polycyclic generating set} and use the associated subgroup series as described in \cite{specialpcgs}.
\end{example}

Throughout, we assume we have fixed a canonical filtration, for example, the one defined in Example \ref{ex_solv}, and refer to the resulting subgroup series as \emph{the} canonical characteristic subgroup series of~$G$. To simplify the notation, we usually denote this series by $G=G_1\geq G_2\geq \ldots\geq G_{c+1}=\{1\}$.

\section{Canonical presentations}\label{sec_solv}
The aim of this section is to define, for a finite solvable group $G$, a  polycyclic presentation $\can(G)$ such that $G\cong \tilde G$ if and only if $\can(G)=\can(\tilde G)$. Our construction also yields an explicit isomorphism  $\varphi\colon G\to \can(G)$, which we will discuss in Section \ref{sec_compiso}.

Let  $G=G_1\geq G_2\geq\ldots\geq G_{c+1}=\{1\}$ be the canonical characteristic series of~$G$, see Section \ref{sec_chars}. We  construct $\can(G)$ as an iterated extension by working up this characteristic  series. We assume that  $H=\can(G/G_{i})$ and an epimorphism $\pi\colon G\to H$ are constructed, and we then construct $\can(G/G_{i+1})$ as the \emph{canonical} extension of $G/G_i$ by $G_i/G_{i+1}$ that is isomorphic to $G/G_{i+1}$. For this we first discuss how to fix a \emph{canonical} module structure; the second step is then to fix a \emph{canonical} $2$-cocycle. We deal with these tasks in the subsequent~sections.

\subsection{Module structure}\label{sec_module}
Let $G$ be a finite solvable group with canonical characteristic series $G=G_1\geq\ldots \geq G_{c+1}=\{1\}$.  Let $\alpha\colon G\to \tilde G$ be an isomorphism and fix $k\in\{2,\ldots,c\}$. To simplify the notation, the induced isomorphisms $G/G_k\to \tilde G/\tilde G_k$ and $G_k/G_{k+1}\to \tilde G_k/\tilde G_{k+1}$ are also denoted by~$\alpha$.

Note that $G/G_{k+1}$ is an extension of $G/G_k$ by $G_k/G_{k+1}$, and $G/G_{k+1}$ acts on $G_k/G_{k+1}$ by conjugation yielding a \emph{module structure}  $G/G_{k+1}\to \Aut(G_k/G_{k+1})$. Since $G_k/G_{k+1}$ is abelian, this action induces a well-defined module structure  $G/G_k\to\Aut(G_k/G_{k+1})$. The next lemma compares this module structure with the one associated with the extension $\tilde G/\tilde G_{k+1}$.

\begin{lemma}\label{lem_mod1} Let $\alpha\colon G\to\tilde G$ be an isomorphism. If $\mu\colon G/G_k\to \Aut(G_k/G_{k+1})$ and $\tilde \mu\colon \tilde G/\tilde G_k\to \Aut(\tilde G_k/\tilde G_{k+1})$ are the module structures induced by conjugation, then for all $g\in G$\[\tilde \mu(\alpha(g)\tilde G_k)=\alpha\circ\mu(gG_k)\circ\alpha^{-1}.\]
\end{lemma}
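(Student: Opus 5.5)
This is a direct verification: evaluate both sides on an arbitrary element $mG_{k+1}$ of $G_k/G_{k+1}$ and use that $\alpha$ is a homomorphism. First we record what the induced maps are. By property (b) of Definition~\ref{def_css}, $\alpha(G_j)=\tilde G_j$ for all $j$. Hence $\alpha$ induces well-defined isomorphisms
\[G/G_k\to\tilde G/\tilde G_k,\; gG_k\mapsto \alpha(g)\tilde G_k, \qquad G_k/G_{k+1}\to \tilde G_k/\tilde G_{k+1},\; mG_{k+1}\mapsto \alpha(m)\tilde G_{k+1}.\]
The inverse of the second map is $\tilde m\tilde G_{k+1}\mapsto \alpha^{-1}(\tilde m)G_{k+1}$, so the composite $\alpha\circ\mu(gG_k)\circ\alpha^{-1}$ on the right-hand side is a well-defined automorphism of $\tilde G_k/\tilde G_{k+1}$.

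Next we fix the convention for the module structure. Conjugation gives $\mu(gG_k)(mG_{k+1})=m^gG_{k+1}$. This is independent of the coset representative $g$ because $G_k/G_{k+1}$ is abelian, as noted in the text before the lemma. The same formula defines $\tilde\mu$.

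The main computation is as follows. Take $g\in G$ and $\tilde m\in\tilde G_k$, and write $\tilde m=\alpha(m)$ with $m=\alpha^{-1}(\tilde m)\in G_k$. Then
\[\bigl(\alpha\circ\mu(gG_k)\circ\alpha^{-1}\bigr)(\tilde m\tilde G_{k+1}) = \alpha\bigl(m^g G_{k+1}\bigr)=\alpha(g^{-1}mg)\tilde G_{k+1}=\alpha(m)^{\alpha(g)}\tilde G_{k+1},\]
using that $\alpha$ is a homomorphism. The last expression equals $\tilde\mu(\alpha(g)\tilde G_k)(\tilde m\tilde G_{k+1})$. The two automorphisms agree on every element, so they are equal, which proves the claim.

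There is no real obstacle. The only point that needs care is the bookkeeping of the abuse of notation, so that each instance of $\alpha$ is read as the correct induced map. With that settled, the computation above is the whole argument.
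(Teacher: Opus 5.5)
Your proposal is correct and follows the paper's own argument: both evaluate $\alpha\circ\mu(gG_k)\circ\alpha^{-1}$ on an element $\alpha(h)\tilde G_{k+1}$ and use that $\alpha$ is a homomorphism to get $\alpha(h^g)\tilde G_{k+1}=\alpha(h)^{\alpha(g)}\tilde G_{k+1}$. Your extra remarks, that $\alpha(G_j)=\tilde G_j$ and that $\mu$ is well defined on $G/G_k$, are accurate but just spell out details the paper leaves implicit.
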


\begin{proof}
  The automorphism $\alpha\circ\mu(gG_k)\circ\alpha^{-1}\in \Aut(\tilde G_k/\tilde G_{k+1})$ maps $\alpha(h)\tilde G_{k+1}$ to $\alpha(h^g)\tilde G_{k+1}$, which is equal to $\alpha(h)^{\alpha(g)}\tilde G_{k+1}$; the claim follows.
\end{proof}

\begin{definition}\label{def_actAHM} Let  $A$ and $B$ be groups and $(u,v)\in \Aut(A)\times \Aut(B)$. Let $\kappa_v\colon \Aut(B)\to\Aut(B)$ be conjugation by $v^{-1}$. For a group homomorphism $\varepsilon\colon A\to \Aut(B)$ define $(u,v)\cdot\varepsilon = \kappa_v\circ\varepsilon\circ u^{-1}$, that is,
 \[(u,v)\cdot\varepsilon\;\colon\; A\to\Aut(B),\quad a\mapsto v\circ \varepsilon(u^{-1}(a))\circ v^{-1}.\]
This defines a group action of $\Aut(A)\times \Aut(B)$ on the set of all $A$-module structures on $B$.
\end{definition}

Now let $H=\can(G/G_k)=\can(\tilde G/\tilde G_k)$ and $M=C_p^d$ (the elementary abelian group of size $p^d$) such that $M\cong G_k/G_{k+1}$.  Choose isomorphisms
\begin{equation}\label{eq_isoms}
\begin{aligned}\varphi&\colon G/G_k\to H, &\hspace*{1cm}  \beta&\colon G_k/G_{k+1}\to M,\\
 \tilde \varphi&\colon \tilde G/\tilde G_k\to H, & \tilde \beta&\colon \tilde G_k/\tilde G_{k+1}\to M.
\end{aligned}
\end{equation}
These isomorphisms allow us to translate the module structures $\mu$ and $\tilde \mu$ described in Lemma \ref{lem_mod1} to $H$-module structures on $M$; we denote these by  $\varepsilon\colon H\to \Aut(M)$ and $\tilde\varepsilon\colon H\to \Aut(M)$: If $h\in H$, then \begin{eqnarray}\label{eq_ete}\varepsilon(h)=  \beta\circ\mu(\varphi^{-1}(h))\circ\beta^{-1}\quad\text{and}\quad \tilde \varepsilon(h)=  \tilde \beta\circ\tilde\mu(\tilde \varphi^{-1}(h))\circ\tilde\beta^{-1}.\end{eqnarray}
The next result shows that the $H$-module structures on $M$ induced by isomorphic groups lie in the same orbit under the action of $\Aut(H)\times \Aut(M)$ defined in Definition \ref{def_actAHM}.

\begin{proposition}\label{prop_mod}
  If $\tilde\varepsilon$ and $\varepsilon$ are as in \eqref{eq_ete}, then with the  notation of Definition \ref{def_actAHM}, the following hold.
  \begin{ithm}
  \item There is $(u,v)\in\Aut(H)\times\Aut(M)$ such that $\tilde\varepsilon=(u,v)\cdot \varepsilon$.
  \item If $(u,v)\in\Aut(H)\times\Aut(M)$, then $\hat\varepsilon=(u,v)\cdot \varepsilon$ is the $H$-module structure on $M$ defined by some group $\hat G$ isomorphic to $G$.
  \end{ithm}
\end{proposition}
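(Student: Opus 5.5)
For part a), I will write down the candidate pair explicitly. Put
\[u=\tilde\varphi\circ\alpha\circ\varphi^{-1}\in\Aut(H)\quad\text{and}\quad v=\tilde\beta\circ\alpha\circ\beta^{-1}\in\Aut(M),\]
where $\alpha\colon G\to\tilde G$ is the given isomorphism. Here $\alpha$ also denotes the induced maps on $G/G_k$ and $G_k/G_{k+1}$. These maps are well defined by property (b) of Definition \ref{def_css}.

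Now take $h\in H$ and set $g G_k=\varphi^{-1}(u^{-1}(h))$. Unwinding the definitions gives $u^{-1}(h)=\varphi(\alpha^{-1}(\tilde\varphi^{-1}(h)))$, so $\alpha(gG_k)=\tilde\varphi^{-1}(h)$. Then
\[((u,v)\cdot\varepsilon)(h)=v\circ\beta\circ\mu(gG_k)\circ\beta^{-1}\circ v^{-1}=\tilde\beta\circ\alpha\circ\mu(gG_k)\circ\alpha^{-1}\circ\tilde\beta^{-1}.\]
By Lemma \ref{lem_mod1}, the right-hand side equals $\tilde\beta\circ\tilde\mu(\alpha(g)\tilde G_k)\circ\tilde\beta^{-1}=\tilde\beta\circ\tilde\mu(\tilde\varphi^{-1}(h))\circ\tilde\beta^{-1}=\tilde\varepsilon(h)$. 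This proves a).

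For part b), I will take $\hat G=G$ itself (or any isomorphic copy) and change only the identifications. Choose $\hat\varphi=u\circ\varphi\colon G/G_k\to H$ and $\hat\beta=v\circ\beta\colon G_k/G_{k+1}\to M$. Both are isomorphisms, so they are admissible choices in \eqref{eq_isoms}. Substituting them into \eqref{eq_ete} gives
\[\hat\beta\circ\mu(\hat\varphi^{-1}(h))\circ\hat\beta^{-1}=v\circ\beta\circ\mu(\varphi^{-1}(u^{-1}(h)))\circ\beta^{-1}\circ v^{-1}=((u,v)\cdot\varepsilon)(h).\]
So $\hat\varepsilon$ is the module structure induced by $\hat G=G$ with respect to these isomorphisms. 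If the statement is meant to require that $\varphi$ be the fixed canonical isomorphism coming from the construction, I will instead realise $u$ by transport of structure. Let $\hat G$ be an abstract copy of $G$ via an isomorphism $\alpha\colon G\to\hat G$, and use the induced isomorphism $\hat G/\hat G_k\to H$ given by $u\circ\varphi\circ\alpha^{-1}$. Part a) with this $\alpha$ then shows consistency.

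The main obstacle is bookkeeping rather than mathematics. The central point is the well-definedness of the induced maps on the sections, which depends on property (b) of the canonical filtration. After that, the only risk is keeping the inverses in Definition \ref{def_actAHM} in the right order, and I will check this carefully in the composite computation of a).
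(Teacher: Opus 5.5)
Your proof is correct and matches the paper's argument. In a) you use the same pair $u=\tilde\varphi\circ\alpha\circ\varphi^{-1}$, $v=\tilde\beta\circ\alpha\circ\beta^{-1}$ together with Lemma~\ref{lem_mod1}, only running the computation from $(u,v)\cdot\varepsilon$ to $\tilde\varepsilon$ rather than the reverse. In b) you take $\hat G=G$ with $\hat\varphi=u\circ\varphi$ and $\hat\beta=v\circ\beta$, exactly as the paper does, so the transport-of-structure fallback is not needed.
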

\begin{proof}
 \begin{iprf}
\item Let $h\in H$ and recall from \eqref{eq_ete} that  $\mu(\varphi^{-1}(h))=\beta^{-1}\circ \varepsilon(h)\circ \beta$.  Now Lemma \ref{lem_mod1} yields
\[\alpha\circ (\beta^{-1}\circ \varepsilon(h)\circ \beta)\circ\alpha^{-1}=\alpha\circ \mu(\varphi^{-1}(h))\circ \alpha^{-1}=\tilde\mu(\alpha(\varphi^{-1}(h))),\]
and therefore
\begin{align*}
  \tilde\varepsilon(h) &=  \tilde \beta\circ\tilde\mu(\tilde \varphi^{-1}(h))\circ\tilde\beta^{-1}\\
  &=  \tilde \beta\circ\tilde\mu(\alpha(\alpha^{-1}(\tilde \varphi^{-1}(h))))\circ\tilde\beta^{-1}\\
  &=\tilde \beta\circ (\alpha\circ \mu(\alpha^{-1}(\tilde\varphi^{-1}(h)))\circ \alpha^{-1} )    \circ\tilde\beta^{-1}\\
  &= (\tilde \beta\circ \alpha\circ \beta^{-1})\circ\varepsilon [\varphi(\alpha^{-1}(\tilde\varphi^{-1}(h)))] \circ ( \beta   \circ \alpha^{-1}     \circ\tilde\beta^{-1}).
\end{align*}
Note that $v=\tilde \beta   \circ \alpha     \circ \beta^{-1}\in \Aut(M)$ and $u=\tilde\varphi \circ\alpha\circ \varphi^{-1}\in \Aut(H)$, and so the claim follows from
\[\tilde\varepsilon = \kappa_v \circ \varepsilon\circ u^{-1}=(u,v)\cdot\varepsilon.\]
\item As before, let $\varepsilon\colon H\to \Aut(M)$ be the module structure induced by $G$ with respect to isomorphisms $\varphi$ and $\beta$. Let $(u,v)\in\Aut(H)\times\Aut(M)$ and  define $\tilde\varepsilon=\kappa_v\circ\varepsilon \circ u^{-1}$. Let $\hat G=G$ and let $\hat\alpha\colon G\to \hat G$ be the identity. Now  define {$\hat\varphi=u\circ\varphi$ and $\hat\beta=v\circ\beta$.} The proof of part a) shows that $\hat \varepsilon\colon H\to\Aut(M)$ is defined by the group $\hat G$ and isomorphisms $\hat\alpha$, $\hat\varphi$, and $\hat\beta$.\qedhere
\end{iprf}
\end{proof}

The module structure $\varepsilon\colon H\to\Aut(M)$ is a group homomorphism defined on the canonical group $H=\langle x_1,\ldots,x_m\mid \mathcal{R}\rangle$, so determined by the matrix tuple $T=(\varepsilon(x_1),\ldots,\varepsilon(x_m))$. It is straightforward to define an ordering on the set of all module structures, for example, by representing such a matrix tuple as single long vector and then use the lexicographical ordering. With respect to a fixed ordering, we define the following.

\begin{definition}\label{def_canmod}
The \emph{canonical $H$-module structure on $M$} is the  minimal element in the orbit of $\varepsilon$ under the action of  $\Aut(H)\times\Aut(M)$
\end{definition}

\begin{remark}\label{remtup}
 The group $\Aut(M)$  acts by matrix conjugation on the matrix tuples  describing our module structures. Determining a canonical element in that orbit is the \emph{canonical forms for matrix tuples} problem, see \cite{cantup} for recent progress on the theoretical complexity of this problem.  Computing the orbit of a matrix tuple under $\Aut(H)\times\Aut(M)$ is one of the major bottlenecks of our practical implementation; we describe some attempts to improve practicality in Section \ref{sec_bottlenecks}.
\end{remark}

\begin{remark}\label{canmod}
  Let $H$, $M$, $\varphi$, $\beta$,  $\varepsilon$ be as in \eqref{eq_isoms} and \eqref{eq_ete}. Let $(u,v)\in\Aut(H)\times \Aut(M)$ such that $\hat\varepsilon=(u,v)\cdot\varepsilon$ is the canonical module structure. The proof of  Proposition~\ref{prop_mod} shows that if  the isomorphisms  $\varphi$ and $\beta$ are replaced by {$\hat\varphi=u\circ\varphi$ and $\hat\beta=v\circ\beta$}, respectively, then the module structure $\varepsilon$ will be replaced by $\hat\varepsilon$.  Thus,  going forward, we always assume that the isomorphisms $\varphi$ and $\beta$ are chosen such that the induced $H$-module structure on $M$ is the canonical one, denoted $\varepsilon\colon H\to\Aut(M)$.
\end{remark}

\subsection{Canonical extensions}\label{sec_cps}
We now describe the construction of the canonical presentation. As before, let $G$ be a finite solvable group with canonical characteristic subgroup series $G=G_1\geq \ldots\geq G_{c+1}=1$ as defined in Section \ref{sec_chars}. We assume we have computed the group $H=\can(G/G_k)$ with epimorphism $\pi\colon G\to H$, and we want to construct $\can(G/G_{k+1})$ as the canonical extension of $H$ by  $M\cong G_k/G_{k+1}$ isomorphic to $G/G_{k+1}$. For this we first consider $G_k/G_{k+1}$ as an $H$-module and arrange that this module structure is the canonical one, see Definition \ref{def_canmod}. Having fixed this module structure, we then define $\can(G/G_{k+1})$ as the extension defined by a canonical tail vector in $\mathcal{Z}(H,M)$; cf.\ Section \ref{epcg}. We  explain this in more detail.

The  canonical presentation for the abelian group $G/G_2=G/G'$ can be defined as follows.
\begin{definition}\label{def_k2}
  Let  $A\cong C_e$ be a cyclic group and factorise $e=p_1\ldots p_\ell$ with primes $p_1\geq p_2\geq \ldots\geq p_\ell$. Then
  \[\can(A)=\langle g_1,\ldots,g_\ell \mid g_i^{p_i}=g_{i+1}\; (i=1,\ldots,\ell-1),\; g_\ell^{p_\ell},\; g_j^{g_i}=g_j\; (i<j)\rangle.\]
  Let $A$ be  an abelian group with abelian invariants $[e_1,\ldots,e_m]$. Then $A$ is isomorphic to $C_{e_1}\times\ldots\times C_{e_m}$,  and $\can(A)$ is defined as the polycyclic presentation on generators $g_1,g_2,\ldots $ describing the direct product of $\can(C_{e_1})$, \ldots, $\can(C_{e_m})$.
\end{definition}

Note that the abelian invariants of $G$ can be computed by applying the well-known abelian quotient algorithm, see \cite[Section 9.2]{HoltEick}; this also produces an epimorphism $G\to \can(G/G')=\can(G/G_2)$. Our approach is to lift this iteratively to canonical presentations for  the quotients $G/G_k$ for $k=3,4,\dots$, until we have determined $\can(G/G_{c+1})=\can(G)$.

{\bf Notation.} {\it In the following, we assume that $k\in\{3,\ldots,c\}$ and  we have an epimorphism $\pi\colon G\to H$  with  induced isomorphism  $\varphi\colon G/G_k\to H$, defined by $gG_k\mapsto \pi(g)$.  By construction, $G_k/G_{k+1}$ is an elementary abelian  $G$-module, say of size $p^m$. We choose a basis of this module and define an isomorphism \[\beta\colon G_k/G_{k+1}\to M\quad\text{where}\quad M=\langle a_1,\ldots,a_m\mid  a_1^p,\ldots,a_m^p, a_j^{a_i}=a_j \;(i<j)\rangle.\]
Following Remark \ref{canmod}, we assume that $\varphi$ (hence $\pi$) and $\beta$ are chosen such that the induced module structure $\varepsilon\colon H\to \Aut(M)$ is the canonical one. We fix this module structure in the following and for $b\in M$ and $h\in H$ abbreviate $b^h=b^{\varepsilon(h)}$.}

Fix a normalised transversal $\tau\colon G/G_k\to G/G_{k+1}$  and define  \[\gamma'\colon G/G_k\times G/G_k\to G_k/G_{k+1},\quad (a,b)\mapsto \tau(ab)^{-1}\tau(a)\tau(b).\]Then $G/G_{k+1}$ is isomorphic to the group
\[E(\gamma')=\{(a,b) \mid a\in G/G_k, b\in G_k/G_{k+1}\}\]
with $(a,b)(c,d)=(ac,b^cd\gamma'(a,c))$ via  $G/G_{k+1}\to E(\gamma')$, $\tau(a)n\mapsto(a,n)$.

We now translate these constructions to $H$ and $M$, and define
\begin{eqnarray}\label{eq_coc}\gamma=\beta\circ \gamma'\circ (\varphi^{-1},\varphi^{-1})\colon H\times H\to M.
\end{eqnarray}
By construction, the following holds.
\begin{lemma}\label{lem_iso1}
The map $\gamma$ is a $2$-cocycle in $Z^2(H,M)$ that defines a group
\[E(\gamma)=\{(g,a): g\in H,a\in M\}\] with multiplication $(g,a)(h,b)=(gh,a^hb\gamma(g,h))$. This group is isomorphic to $G/G_{k+1}$ via $(g,a)\mapsto \tau(\varphi^{-1}(g))\beta^{-1}(a)$.
\end{lemma}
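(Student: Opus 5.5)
The plan is to transport everything from the pair $(G/G_k,\,G_k/G_{k+1})$ to the pair $(H,M)$ along the isomorphisms $\varphi$ and $\beta$. Rather than checking the cocycle identity for $\gamma$ directly, I will show that the proposed map $\psi\colon E(\gamma)\to G/G_{k+1}$, $(g,a)\mapsto \tau(\varphi^{-1}(g))\beta^{-1}(a)$, is a bijective homomorphism from the set $E(\gamma)$ with the stated multiplication. Both the cocycle property and the group structure then follow by transport of structure.

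First, $\psi$ is a bijection. Every element of $G/G_{k+1}$ is uniquely of the form $\tau(x)n$ with $x\in G/G_k$ and $n\in G_k/G_{k+1}$, as recalled in Section~\ref{epcg}. Since $\varphi$ and $\beta$ are bijections, the assignment $(g,a)\mapsto(\varphi^{-1}(g),\beta^{-1}(a))$ identifies the pairs $(g,a)$ with these unique representations.

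Second, $\psi$ is multiplicative. Set $x=\varphi^{-1}(g)$, $y=\varphi^{-1}(h)$, $n=\beta^{-1}(a)$ and $m=\beta^{-1}(b)$. In $G/G_{k+1}$ we compute
\[\tau(x)n\,\tau(y)m=\tau(x)\tau(y)\,n^{\tau(y)}m=\tau(xy)\,\gamma'(x,y)\,n^{\tau(y)}m,\]
and abelianness of $G_k/G_{k+1}$ lets us reorder the last three factors freely. Two facts then convert this to $\psi\big((gh,\,a^hb\gamma(g,h))\big)$:
\begin{itemize}
\item By \eqref{eq_coc}, $\beta(\gamma'(x,y))=\gamma(g,h)$.
\item By \eqref{eq_ete}, $\beta(n^{\tau(y)})=\beta(\mu(y)(n))=\varepsilon(h)(a)=a^h$. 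This holds because conjugation by $\tau(y)$ on $G_k/G_{k+1}$ is exactly $\mu(y)$, independent of the chosen lift.
\end{itemize}
One bookkeeping point: $\mu$ is a right action (exponent notation), and the convention $b^h=b^{\varepsilon(h)}$ must match how \eqref{eq_ete} transports it. Since $\varepsilon$ is conjugate to $\mu\circ\varphi^{-1}$ via $\beta$, the right-action convention carries over consistently.

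Finally, I conclude by transport of structure. A bijection from a set with a binary operation onto a group that respects the operation forces the set to be a group, and $\psi$ is then an isomorphism. Associativity of $E(\gamma)$ is exactly the cocycle identity $\gamma(g,h)^{l}\gamma(gh,l)=\gamma(h,l)\gamma(g,hl)$. So the cocycle identity can be read off, or one can cite that $\gamma'$ is a $2$-cocycle (Section~\ref{epcg}) and that the cocycle condition is preserved under the compatible isomorphism pair $(\varphi,\beta)$. Normalisation follows from $\tau(1)=1$, which gives $\gamma'(1,\cdot)=\gamma'(\cdot,1)=1$.

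The main obstacle, though mild, is the convention check in the second step: matching the module action $a^h$ with conjugation by $\tau(\varphi^{-1}(h))$ and ordering the factors so that the formula $(gh,a^hb\gamma(g,h))$ comes out exactly. Everything else is routine.
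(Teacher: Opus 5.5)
Your proof is correct and is essentially the paper's argument: the paper gives no separate proof, only the remark that the lemma holds ``by construction'', i.e.\ by transporting the isomorphism $G/G_{k+1}\to E(\gamma')$, $\tau(a)n\mapsto(a,n)$, stated just before the lemma, along $\varphi$ and $\beta$. Your direct check in $G/G_{k+1}$ makes that transport explicit in one step rather than composing with the $E(\gamma')$ isomorphism, and it also correctly obtains the cocycle identity from associativity.
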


We identify  $M$ with $1\times M\leq E(\gamma)$. Lemma \ref{lem_iso1} shows that  $E(\gamma)\cong G/G_{k+1}$ and there is an isomorphism that maps $M\leq E(\gamma)$ to $G_k/G_{k+1}\leq G/G_{k+1}$. If we repeat the same construction for a group $\tilde G$ that is isomorphic to $G$, then  we obtain a $2$-cocycle $\delta\in Z^2(H,M)$ with \[E(\delta)\cong \tilde G/\tilde G_{k+1}\cong G/G_{k+1}\cong E(\gamma),\] and there is an isomorphism $E(\delta)\to E(\gamma)$ that maps $M\leq E(\delta)$ to $M\leq E(\gamma)$.
The isomorphism problem of such extensions can be solved via the group of compatible pairs, which is defined as
\[\Comp(H,M)=\{(\mu,\nu)\in \Aut(H)\times \Aut(M)\mid \forall h\in H, b\in M: \nu(b^h)=\nu(b)^{\mu(h)}\},\]and its action on $Z^2(H,M)$ where  $(\mu,\nu)\in \Comp(H,M)$ acts on  $\psi\in Z^2(H,M)$ via
\[(\mu,\nu)\cdot \psi =  \nu\circ\psi\circ(\mu^{-1},\mu^{-1}).\]We refer to \cite[Section 5.1.3]{DietrichEick} for more details on compatible pairs; see also \cite{HoltEick,Robinson82,SmithPhD}. In particular, it is known that there is an induced action on  $H^2(H,M)$.  For the next result, see \cite[Proposition 5.13]{DietrichEick}.

\begin{proposition}
Let $\gamma,\tilde\gamma\in Z^2(H,M)$. There is an isomorphism $E(\tilde \gamma)\to E(\gamma)$ that maps $M\leq E(\tilde\gamma)$ to $M\leq E(\gamma)$ if and only if the classes $\tilde\gamma+B^2(H,M)$ and $\gamma+B^2(H,M)$ lie in the same orbit under the action of $\Comp(H,M)$.
\end{proposition}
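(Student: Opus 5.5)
We prove both directions by explicit constructions. Throughout, an isomorphism $E(\tilde\gamma)\to E(\gamma)$ mapping $M$ to $M$ is converted into a compatible pair together with a $1$-cochain, and conversely.

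For the direction ``if'', suppose $(\mu,\nu)\in\Comp(H,M)$ and $\gamma-(\mu,\nu)\cdot\tilde\gamma\in B^2(H,M)$. We write the cohomology additively in $M$; in the multiplicative notation of the paper this is the product $\gamma\cdot((\mu,\nu)\cdot\tilde\gamma)^{-1}$. Choose a map $\zeta\colon H\to M$ whose coboundary is this difference, so that
\[
\gamma(g,h)=\nu(\tilde\gamma(\mu^{-1}g,\mu^{-1}h))\,\zeta(gh)^{-1}\zeta(g)^{h}\zeta(h).
\]
Define
\[
\Phi\colon E(\tilde\gamma)\to E(\gamma),\quad (g,a)\mapsto (\mu(g),\,\nu(a)\zeta(\mu(g))).
\]
We then verify that $\Phi$ is a homomorphism by expanding $\Phi((g,a)(h,b))$ and $\Phi(g,a)\Phi(h,b)$. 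Three ingredients make the two sides agree: the compatibility identity $\nu(a^h)=\nu(a)^{\mu(h)}$, commutativity of $M$, and the displayed cocycle relation. The map $\Phi$ is bijective, since $\mu$ and $\nu$ are bijective. It sends $(1,a)$ to $(1,\nu(a)\zeta(1))$. We may assume $\zeta$ is normalised, i.e.\ $\zeta(1)=1$; this is possible because all cocycles coming from normalised transversals are normalised. Hence $\Phi(M)=M$.

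For the direction ``only if'', let $\Phi\colon E(\tilde\gamma)\to E(\gamma)$ be an isomorphism with $\Phi(M)=M$. Then $\Phi$ induces $\nu=\Phi|_M\in\Aut(M)$ and $\mu\in\Aut(H)$ on the quotient $E/M\cong H$, where $\mu(g)$ is the first coordinate of $\Phi(g,1)$. Write $\Phi(g,1)=(\mu(g),\xi(g))$ for some map $\xi\colon H\to M$.

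The conjugation $(1,a)^{(h,1)}=(1,a^h)$ holds in both extensions, so applying $\Phi$ gives $\nu(a^h)=\nu(a)^{\mu(h)}$. Hence $(\mu,\nu)\in\Comp(H,M)$.

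Next, in $E(\tilde\gamma)$ we have $(g,1)(h,1)=(gh,1)(1,\tilde\gamma(g,h))$. Applying $\Phi$ and comparing second coordinates in $E(\gamma)$ yields
\[
\xi(g)^{\mu(h)}\xi(h)\gamma(\mu g,\mu h)=\xi(gh)\,\nu(\tilde\gamma(g,h)).
\]
Setting $\zeta=\xi\circ\mu^{-1}$ and substituting $g\mapsto\mu^{-1}g$, $h\mapsto\mu^{-1}h$ rearranges this to exactly the statement that $\gamma$ and $(\mu,\nu)\cdot\tilde\gamma$ differ by the coboundary of $\zeta$. Consequently the classes of $\gamma$ and $(\mu,\nu)\cdot\tilde\gamma$ in $H^2(H,M)$ coincide, which is the required orbit statement. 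One also notes here that the action of $\Comp(H,M)$ preserves $B^2(H,M)$, so the action on classes is well defined.

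The main obstacle is bookkeeping rather than ideas. The key points are using the correct conventions (right actions $a^h$, $\mu^{-1}$ in the action, and the multiplication rule $(g,a)(h,b)=(gh,a^hb\gamma(g,h))$), and in particular checking the sign and placement of the $\zeta$-terms in the coboundary. I would carry out the homomorphism check once carefully in the first direction and reuse that computation, read backwards, for the second.
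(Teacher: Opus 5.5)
Your strategy is the standard one: it is the argument behind the result of Dietrich and Eick that the paper cites, and your $\Phi$ has exactly the shape of the paper's isomorphism \eqref{eq_cobf}. Your ``only if'' direction is correct. The ``if'' direction, however, has a sign error, and the homomorphism check you claim fails as written.

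Put $g'=\mu(g)$ and $h'=\mu(h)$. The second coordinate of $\Phi((g,a)(h,b))$ is $\nu(a)^{h'}\nu(b)\,\nu(\tilde\gamma(g,h))\,\zeta(g'h')$. The second coordinate of $\Phi(g,a)\Phi(h,b)$ is $\nu(a)^{h'}\nu(b)\,\zeta(g')^{h'}\zeta(h')\,\gamma(g',h')$. These agree exactly when $\gamma(g',h')=\nu(\tilde\gamma(g,h))\,\zeta(g'h')(\zeta(g')^{h'})^{-1}\zeta(h')^{-1}$, i.e.\ when $(\mu,\nu)\cdot\tilde\gamma=\gamma+\delta_\zeta$. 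Your displayed relation instead says $\gamma=(\mu,\nu)\cdot\tilde\gamma+\delta_\zeta$, and with it the two sides differ by $\delta_\zeta(g',h')^2$.

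A concrete counterexample: let $H=\langle x\rangle\cong C_2$ act trivially on $M=\langle m\rangle\cong C_3$, take $\tilde\gamma$ trivial, $\mu,\nu$ the identities, and $\zeta(x)=m$. Then $\gamma(x,x)=m^2$, and $\Phi(x,1)^2=(x,m)^2=(1,m^4)=(1,m)\neq(1,1)=\Phi((x,1)^2)$.

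The fix is to use $\zeta^{-1}$ in place of $\zeta$ (note $\delta_{\zeta^{-1}}=\delta_\zeta^{-1}$ since $M$ is abelian). Equivalently, choose $\zeta$ with $\delta_\zeta=(\mu,\nu)\cdot\tilde\gamma-\gamma$. This is precisely the convention in \eqref{eq_cobf}, where $\nu\circ\gamma\circ(\mu^{-1},\mu^{-1})=\lambda+\delta_f$. Your own converse computation produces this correct relation, so reusing it ``read backwards'', as you planned, would have exposed the inconsistency.

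A smaller point concerns normalisation. $\zeta(1)=1$ is not something you ``may assume''; it is forced, because $\delta_\zeta(1,1)=\zeta(1)$ must be trivial when both cocycles are normalised. Also, $\Phi(1\times M)\subseteq 1\times M$ holds regardless, since $\mu(1)=1$. What you should state explicitly instead is that all cocycles are normalised, as in the paper, where they come from normalised transversals. The identities $(g,1)(h,1)=(gh,1)(1,\tilde\gamma(g,h))$ and $(1,a)^{(h,1)}=(1,a^h)$ in your converse rely on $\gamma(g,1)=\gamma(1,h)=1$. Without normalisation, $a\mapsto(1,a)$ is not even a homomorphism $M\to E(\gamma)$.
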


\begin{remark}\label{rem_cp}
  Note that $(\mu,\nu)\in\Comp(H,M)$ if and only if  $\varepsilon(h)=\nu^{-1}\circ \varepsilon(\mu(h))\circ\nu$ for every $h\in H$, which is equivalent to $(\mu,\nu)\in\Aut(H)\times\Aut(M)$ stabilising the module structure $\varepsilon\colon H\to\Aut(M)$; thus, $\Comp(H,M)={\rm Stab}_{\Aut(H)\times\Aut(M)}(\varepsilon)$.
\end{remark}

The next result shows that the cohomology class of $\gamma$ depends only on the isomorphism type of $G$; this is an important property for our construction of $\can(G)$.

\begin{proposition}\label{cor_ind1}
With the previous notation, the construction of the $2$-cocycle $\gamma$ depends on the chosen transversal $\tau$ and the choice of isomorphisms $\varphi$ and $\beta$. The following hold for the cohomology class $\gamma+B^2(H,M)$.
  \begin{ithm}
  \item The $\Comp(H,M)$-orbit of $\gamma+B^2(H,M)$ is independent of the choices of $\varphi$, $\beta$,  $\tau$.
  \item  The $\Comp(H,M)$-orbit of $\gamma+B^2(H,M)$ depends only on the isomorphism type of $G$.
  \end{ithm}
\end{proposition}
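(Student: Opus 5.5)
We handle the three choices one at a time, then reduce part b) to part a). Write $E=G/G_{k+1}$, $N=G_k/G_{k+1}$, $Q=G/G_k$.

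\emph{Transversal.} Replace $\tau$ by another normalised transversal $\tau'$. Then $\tau'(a)=\tau(a)s(a)$ for a map $s\colon Q\to N$ with $s(1)=1$. A direct computation from the definition $\gamma'(a,b)=\tau(ab)^{-1}\tau(a)\tau(b)$ shows that the new cocycle differs from $\gamma'$ by the coboundary of $s$. Transporting via $\beta$ and $\varphi$ as in \eqref{eq_coc}, the new $\gamma$ differs from the old one by the coboundary of $\beta\circ s\circ\varphi^{-1}$. So the cohomology class itself is unchanged, not just its orbit.

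\emph{Isomorphisms.} Keep the convention of Remark \ref{canmod} that the induced module structure is the fixed canonical $\varepsilon$. If $(\hat\varphi,\hat\beta)$ is another admissible pair, write $\hat\varphi=\mu\circ\varphi$ and $\hat\beta=\nu\circ\beta$ with $\mu\in\Aut(H)$ and $\nu\in\Aut(M)$. By \eqref{eq_ete}, the induced module structure becomes $(\mu,\nu)\cdot\varepsilon$. Since both pairs induce $\varepsilon$, Remark \ref{rem_cp} gives $(\mu,\nu)\in\Comp(H,M)$. Using the same $\tau$, the new cocycle is
\[\hat\gamma=\nu\circ\beta\circ\gamma'\circ(\varphi^{-1}\circ\mu^{-1},\varphi^{-1}\circ\mu^{-1})=(\mu,\nu)\cdot\gamma.\]
Combined with the transversal step, any change of $(\tau,\varphi,\beta)$ moves the class $\gamma+B^2(H,M)$ within its $\Comp(H,M)$-orbit. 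This proves a).

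\emph{Part b).} Let $\alpha\colon G\to\tilde G$ be an isomorphism, and let $\tilde\varphi,\tilde\beta,\tilde\tau$ be admissible choices for $\tilde G$. By Definition \ref{def_css}(b), $\alpha$ induces isomorphisms $G/G_k\to\tilde G/\tilde G_k$, $G/G_{k+1}\to\tilde G/\tilde G_{k+1}$ and $G_k/G_{k+1}\to\tilde G_k/\tilde G_{k+1}$. Pull back along $\alpha$:
\[\varphi_0=\tilde\varphi\circ\alpha,\quad \beta_0=\tilde\beta\circ\alpha,\quad \tau_0=\alpha^{-1}\circ\tilde\tau\circ\alpha.\]
By Lemma \ref{lem_mod1}, the module structure that $(\varphi_0,\beta_0)$ induces from $\mu$ equals the one $(\tilde\varphi,\tilde\beta)$ induces from $\tilde\mu$, namely the canonical $\varepsilon$. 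So $(\varphi_0,\beta_0,\tau_0)$ is an admissible choice for $G$. Since $\alpha$ is a homomorphism, $\gamma'_{\tau_0}=\alpha^{-1}\circ\tilde\gamma'_{\tilde\tau}\circ(\alpha,\alpha)$. Substituting into \eqref{eq_coc}, the cocycle for $G$ from these choices equals the cocycle for $\tilde G$ from $(\tilde\varphi,\tilde\beta,\tilde\tau)$. Part a) then shows every admissible choice for $G$ gives a class in the same $\Comp(H,M)$-orbit, which proves b).

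The main obstacle is the bookkeeping in the isomorphism step. There we must check that the correction $(\mu,\nu)$ is a compatible pair and not just an element of $\Aut(H)\times\Aut(M)$. This relies on the standing convention that $\varepsilon$ is the fixed canonical structure, together with Remark \ref{rem_cp}. The direction conventions for $\mu^{-1}$ and $\kappa_\nu$ must also match the action on $Z^2(H,M)$ exactly. The coboundary computation in the transversal step is routine.
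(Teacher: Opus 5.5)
Your proof is correct and follows essentially the same route as the paper. In part a) you absorb the change of transversal into a coboundary and show that a change of isomorphisms is a pair $(\mu,\nu)$ stabilising $\varepsilon$, hence a compatible pair. In part b) you use exactly the paper's pulled-back choices $\varphi=\tilde\varphi\circ\psi_k$, $\beta=\tilde\beta\circ\psi_k'$, $\tau=\psi_{k+1}^{-1}\circ\tilde\tau\circ\psi_k$ to get $\gamma=\tilde\gamma$ exactly, and check via Lemma~\ref{lem_mod1} that the module structure is the canonical one. Your explicit coboundary computation and your appeal to Remark~\ref{rem_cp} only spell out steps the paper cites or states briefly.
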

\begin{proof}
 \begin{iprf}
 \item   Choosing a different transversal for $\tau$ changes $\gamma'$ (and so $\gamma$) by a $2$-coboundary, see \cite[Lemma 5.6]{DietrichEick}, which does not affect the cohomology class $\gamma+B^2(H,M)$. If $\varphi'$ and $\beta'$ are different choices for $\varphi$ and $\beta$ (that do not change the $H$-module structure $\varepsilon$), then $\varphi'=\mu\circ \varphi$ and $\beta'=\nu\circ \beta$ for some $(\mu,\nu)\in \Aut(H)\times\Aut(M)$, and, in fact,  $(\mu,\nu)\in \Comp(H,M)$ by Remark \ref{canmod}. The construction in \eqref{eq_coc} with $\varphi'$ and $\beta'$ now yields a  $2$-cocycle $\gamma'$ that satisfies $\gamma'=\nu\circ\gamma\circ (\mu^{-1},\mu^{-1})$, hence $\gamma'$ and $\gamma$ lie in the same $\Comp(H,M)$-orbit, as claimed.
 \item  Let $\psi\colon G\to \tilde G$ be an isomorphism, with induced isomorphisms $\psi_i\colon G/G_i\to \tilde G/\tilde G_i$ and $\psi_i'\colon G_i/G_{i+1}\to \tilde G_i/\tilde G_{i+1}$. Let $\tilde \varphi\colon \tilde G/\tilde G_k\to H$ and $\tilde \beta\colon \tilde G_k/\tilde G_{k+1}\to M$ be isomorphisms such that the induced $H$-module structure on $M$ is the canonical one. Let $\tilde \tau\colon \tilde G/\tilde G_k\to \tilde G/\tilde G_{k+1}$ be a transversal.  These maps allow us to define a $2$-cocycle $\tilde\gamma\in Z^2(H,M)$ as in \eqref{eq_coc}.  We now show that for $G$ we can choose a specific transversal $\tau$ and isomorphisms $\varphi$ and $\beta$ such that the corresponding $2$-cocycle $\gamma$ equals $\tilde\gamma$. (Recall from a)  that we can change $\tau$, $\varphi$, and $\beta$, without changing the $\Comp(H,M)$-orbit.) Specifically, for $G$ we now consider the  isomorphisms $\varphi=\tilde\varphi\circ \psi_k$ and $\beta=\tilde\beta\circ \psi_k'$, and transversal $\tau= \psi_{k+1}^{-1}\circ\tilde\tau\circ\psi_k$. We show with a direct calculation that these maps define the $2$-cocycle $\gamma=\tilde\gamma$; let $a,b\in H$ and recall the definition of~$\gamma$:
  \begin{eqnarray*}
    \gamma(a,b) &=& \beta\big[\gamma'(\varphi^{-1}(a),\varphi^{-1}(b))\big]\\
    &=& \tilde\beta(\psi_k'\big[  \gamma' (\; \psi_k^{-1}(\tilde\varphi^{-1}(a)),\; \psi_k^{-1}(\tilde\varphi^{-1}(b))\;)\big])\\
    &=& \tilde\beta(\psi_k'\big[  \gamma' (\; \psi_k^{-1}(\tilde\varphi^{-1}(a)), \;\psi_k^{-1}(\tilde\varphi^{-1}(b))\;  )  \big]).
  \end{eqnarray*}
  By definition, $\gamma'(u,v)=\tau(uv)^{-1}\tau(u)\tau(v)$ and $\tau(x)=\psi_{k+1}^{-1}(\tilde\tau(\psi_k(x)))$; recall also that $\psi_k'\circ\psi_{k+1}^{-1}(u)=u$ for all $u\in \tilde G/\tilde G_{k+1}$, and so
   \begin{eqnarray*}
     \gamma(a,b) &=&  \tilde\beta(\psi_k'\big[
                       \psi_{k+1}^{-1}(\tilde\tau(\psi_k(\psi_k^{-1}(\tilde\varphi^{-1}(ab)))))^{-1}\\
      &&\hspace*{0.9cm}\psi_{k+1}^{-1}(\tilde\tau(\psi_k(\psi_k^{-1}(\tilde\varphi^{-1}(a)))))\\
      &&\hspace*{0.9cm}\psi_{k+1}^{-1}(\tilde\tau(\psi_k(\psi_k^{-1}(\tilde\varphi^{-1}(b)))))
        \big])\\
     &=& \tilde\beta( \; \tilde\tau(\tilde\varphi^{-1}(ab))^{-1}\tilde\tau(\tilde\varphi^{-1}(a))\tilde\tau(\tilde\varphi^{-1}(b))     \;)\\
     &=& \tilde\beta(\; \tilde\gamma'(\tilde\varphi^{-1}(a),\tilde\varphi^{-1}(b)))\\
     &=& \tilde\gamma(a,b).
   \end{eqnarray*}
   It remains to show that  $\varphi$ and $\beta$ define the same (canonical) $H$-module structure on $M$ as $\tilde\varphi$ and $\tilde\beta$.  If $\mu$ denotes the conjugation action of $G/G_k$ on $G_k/G_{k+1}$, and $\tilde \mu$ denotes the conjugation action of $\tilde G/\tilde G_k$ on $\tilde G_k/\tilde G_{k+1}$, then Lemma \ref{lem_mod1} shows that  $\tilde\mu(\psi_k(g))=\psi_k'\circ \mu(g)\circ \psi_k'^{-1}$ for all $g\in G/G_k$. Now  \eqref{eq_ete} shows that the module action defined by $\tilde\varphi$ and $\tilde\beta$ maps $h\in H$ to
   \begin{eqnarray*}
     && \tilde\beta\circ \tilde\mu(\tilde\varphi^{-1}(h))\circ\tilde\beta^{-1}
     = \tilde\beta\circ   \psi_k'\circ \mu(\psi_k^{-1}(\tilde\varphi^{-1}(h)))\circ \psi_k'^{-1}     \circ\tilde\beta^{-1}
   \end{eqnarray*}
which is the same as the module structure defined by $\varphi=\tilde\varphi\circ \psi_k$ and  $\beta=\tilde\beta\circ \psi_k'$, which maps $h\in H$ to
    \begin{eqnarray*}
    &&   \beta\circ \mu(\varphi^{-1}(h))\circ\beta^{-1}= \tilde\beta\circ \psi_k'\circ \mu(\psi_k^{-1}(\tilde\varphi^{-1}(h)))\circ \psi_k'^{-1}\circ\tilde\beta^{-1}.
    \end{eqnarray*}
    This completes the proof.\qedhere
 \end{iprf}
\end{proof}

It follows from the previous discussion that the $2$-cocycles in $Z^2(H,M)$ that define extensions isomorphic to $G/G_{k+1}$ with $M$ corresponding to $G_k/G_{k+1}$ are exactly the $2$-cocycles in the union of all cosets $\nu\circ\gamma\circ(\mu^{-1},\mu^{-1})+B^2(H,M)$ where $(\mu,\nu)$ runs over the elements in $\Comp(H,M)$. Moreover, this union only depends on the isomorphism type of $G$. It is a finite set of maps on finite sets, and therefore can be furnished with an ordering that allows us to pick a ``minimal element'' $\lambda$ in a canonical way, see also Section \ref{sec_compiso}. As canonical presentation for $G/G_{k+1}$ we now set
\[\can(G/G_{k+1}) =E(\tail(\lambda)),\]
 where $E(\tail(\lambda))$ is as defined in \eqref{eq_etail}. We use this to define $\can(G)$ as follows.

\begin{definition}\label{def_canpres}
  Let $G$ be a finite solvable group with canonical characteristic subgroup series $G=G_1\geq \ldots \geq G_{c+1}=\{1\}$. Starting with $\can(G/G_2)$ as in Definition \ref{def_k2}, we iterate the previous constructions of $\can(G/G_k)$ for $k=2,3,\ldots,c+1$, and set $\can(G)=\can(G/G_{c+1})$.
\end{definition}

In each iteration, we first determine the canonical module structure and then the canonical $2$-cocycle. The results discussed in this section imply the following.

\begin{theorem}\label{thm_main}
If $G$ and $\tilde G$ are finite solvable groups, then    $G\cong \tilde G$ if and only if $\can(G)=\can(\tilde G)$.
\end{theorem}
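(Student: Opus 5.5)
The two directions have different flavours. If $\can(G)=\can(\tilde G)$, then by construction $G\cong\can(G)$. Indeed, each $\can(G/G_{k+1})=E(\tail(\lambda))$ is isomorphic to $E(\lambda)$, and $E(\lambda)$ is isomorphic to $G/G_{k+1}$ because $\lambda$ lies in the $\Comp(H,M)$-orbit of the class of $\gamma$; combine Lemma~\ref{lem_iso1} with the proposition on compatible pairs. The same holds for $\tilde G$, so $G\cong\can(G)=\can(\tilde G)\cong\tilde G$. Since a presentation determines the group it defines up to isomorphism, this direction is immediate.

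For the forward direction, assume $\alpha\colon G\to\tilde G$ is an isomorphism. We prove by induction on $k=2,\ldots,c+1$ that $\can(G/G_k)=\can(\tilde G/\tilde G_k)$ as strings.

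\emph{Base case $k=2$.} By property (b) of Definition~\ref{def_css}, $\alpha$ maps $G_2$ onto $\tilde G_2$. Hence $G/G_2\cong\tilde G/\tilde G_2$ are abelian groups with the same abelian invariants, and Definition~\ref{def_k2} gives identical presentations. The filtration also gives $c(G)=c(\tilde G)$, so both iterations run for the same number of steps.

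\emph{Inductive step.} Suppose $H=\can(G/G_k)=\can(\tilde G/\tilde G_k)$. The groups $G_k/G_{k+1}$ and $\tilde G_k/\tilde G_{k+1}$ are isomorphic via $\alpha$, so both are modelled by the same $M=C_p^m$, the same presentation of $M$ being fixed by $p$ and $m$.
\begin{itemize}
\item \emph{Module structure.} By Proposition~\ref{prop_mod}(a), the module structures $\varepsilon$ and $\tilde\varepsilon$ lie in one $\Aut(H)\times\Aut(M)$-orbit. So the minimal element of Definition~\ref{def_canmod} is the same for both; call it $\varepsilon$. By Remark~\ref{canmod} we may choose $\varphi,\beta$ and $\tilde\varphi,\tilde\beta$ inducing exactly this $\varepsilon$.
\item \emph{Cocycle.} With this common module structure, Proposition~\ref{cor_ind1}(b) shows the $\Comp(H,M)$-orbit of $\gamma+B^2(H,M)$ equals that of $\tilde\gamma+B^2(H,M)$. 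Therefore the union of cosets $\nu\circ\gamma\circ(\mu^{-1},\mu^{-1})+B^2(H,M)$ over $\Comp(H,M)$ is literally the same subset of $Z^2(H,M)$ for $G$ and $\tilde G$. Its fixed-order minimum $\lambda$ is then identical, and so $E(\tail(\lambda))$ is the same presentation string. Note that the presentation relations \eqref{relators} depend only on $H$, $M$, $\varepsilon$ and $\tail(\lambda)$.
\end{itemize}
At $k=c+1$ this gives $\can(G)=\can(\tilde G)$.

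The main obstacle is making sure every object the canonical choice depends on is genuinely determined by the isomorphism type, not by auxiliary choices. There are two such objects: the set over which we minimise (the whole orbit union, which Proposition~\ref{cor_ind1}(a) shows is independent of $\tau,\varphi,\beta$), and the ambient structure $H$, $M$, $\varepsilon$ used to encode strings. I would handle this by making explicit that $H$ and the orderings are fixed data once $H$ is known, and that the cohomology-class orbit statement upgrades to equality of cocycle sets because $B^2(H,M)$ is itself $\Comp(H,M)$-invariant.
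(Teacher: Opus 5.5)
Your proof is correct and follows the paper's route. The paper gives no separate argument; it only notes that the results of Section~\ref{sec_solv} imply the theorem. Your induction up the canonical characteristic series spells out exactly that implication: abelian invariants at the base, Proposition~\ref{prop_mod}(a) with Definition~\ref{def_canmod} for the module structure, Proposition~\ref{cor_ind1}(b) for the orbit of cohomology classes, and Lemma~\ref{lem_iso1} with the compatible-pairs proposition for the converse.
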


\begin{remark}
We compare our approach with the computation of a canonical presentation for finite $p$-groups described in \cite{Obrien93}. The approach in \cite{Obrien93} also uses an iteration, namely,  along the lower exponent-$p$ central series. Assuming a canonical presentation for $H=G/P_k(G)$, it computes a canonical presentation of $G/P_{k+1}(G)$ by considering the $p$-cover $H^*$  of $H$ and constructing a canonical quotient of $H^*$ that is isomorphic to $G/P_{k+1}(G)$. This canonical quotient depends on the choice of a certain canonical \emph{allowable} subgroup of the $p$-multiplier of $H$, which also requires an orbit calculation. This algorithm is part of the ANUPQ algorithms (see \cite{anupq} for the GAP implementations) that provide highly efficient implementations for $p$-group generation, $p$-quotient computation, and computations of canonical presentations. Our approach does not involve a covering group, but it computes a canonical presentation by considering canonical tail vectors describing the given group as suitable extensions. The \emph{cover} constructions in \cite{DH} or \cite{EH} could potentially also be used to define a canonical presentation for finite solvable groups; however, these methods pose some additional technical challenges, which is why we have opted for the approach described in this work.
\end{remark}

\section{Constructing isomorphisms}\label{sec_compiso}
We comment on a few of the crucial steps of the algorithm for computing $\can(G)$ and explain how to construct the isomorphism $G\to \can(G)$ along the way. We continue with the notation of the previous section, that is,  we consider $G/G_{k+1}\cong E(\gamma)$ as an extension of the canonical presentation $H=\can(G/G_k)$ by the module $M\cong G_{k}/G_{k+1}$. The groups for the next iteration (which is necessary if $G_{k+1}\ne \{1\}$) are  $\tilde H=\can(G/G_{k+1})$ and $\tilde M\cong G_{k+1}/G_{k+2}$. We assume that isomorphisms are chosen such that the module structures are the canonical ones, see Definition \ref{def_canmod} and Remark \ref{canmod}. Recall that  $\tilde H=E(\tail(\lambda))$ for some $\lambda \in Z^2(H,M)$.

\medskip

{\bf Tail vectors instead of cocycles.} GAP provides functionality to compute the  spaces $\mathcal{Z}(H,M)$ and $\mathcal{B}(H,M)$ of all tail vectors; see Section~\ref{epcg}.  Importantly, $H^2(H,M)\cong \mathcal{Z}(H,M)/\mathcal{B}(H,M)$, and there is a \emph{deterministic} preimage function for the natural projection $\mathcal{Z}(H,M)\to \mathcal{Z}(H,M)/\mathcal{B}(H,M)$. After computing the tail vector $t\in \mathcal{Z}(H,M)$ corresponding to $\gamma\in Z^2(H,M)$, one can use GAP's functionality to enumerate the $\Comp(H,M)$-orbit of the class $t+\mathcal{B}(H,M)$ in  $\mathcal{Z}(H,M)/\mathcal{B}(H,M)$ and identify a canonical vector in that orbit (for example, the lexicographical least vector). We then take its (deterministic) preimage $\lambda\in\mathcal{Z}(H,M)$ to define $\can(G/G_{k+1})=E(\tail(\lambda))$. In Section \ref{sec_lared} we report on some ideas that aim to  avoid the enumeration of the complete orbit, for example, in the case that $M$ is a trivial $H$-module.

\medskip

{\bf Iteration: the next epimorphism.} For the next iteration, we need to construct an epimorphism $G\to \tilde H$. This can be done as follows. Continuing with the notation used above, Lemma \ref{lem_iso1} yields an isomorphism \[G/G_{k+1}\to E(\gamma),\quad  \tau(g)a\mapsto (\varphi(g),\beta(a)).\]By construction, $\gamma$ and $\lambda$ define cohomology classes in the same $\Comp(H,M)$-orbit, that is, there exists $(\mu,\nu)\in\Comp(H,M)$ and $\delta_f\in B^2(H,M)$ such that \[\nu\circ\gamma\circ(\mu^{-1},\mu^{-1})=\lambda+\delta_f,\] where $\delta_f$ denotes the $2$-coboundary defined by the map $f\colon H\to M$, that is,  $\delta_f(g,h)=f(gh)^{-1}f(g)f(h)$  for $g,h\in H$. A short calculation confirms that this induces an isomorphism \begin{eqnarray}\label{eq_cobf}E(\gamma)\to E(\lambda),\quad (g,a)\to (\mu(g),\nu(a)f(\mu(g))),
\end{eqnarray}see also the discussion on  \cite[Proposition 5.13]{DietrichEick}. Putting everything together, this yields an isomorphism
\[G/G_{k+1}\to E(\lambda), \quad \tau(g)a\mapsto (\mu(\varphi(g)),\nu(\beta(a))f(\mu(\varphi(g)))),\]
and hence an induced epimorphism $G\to E(\lambda)$. Composing with the natural isomorphism from $E(\lambda)$ to $\tilde H=E(\tail(\lambda))$, see the discussion after \eqref{eq_etail}, yields the required epimorphism $G\to \tilde H$. The construction of $\delta_f$ is explained in the next paragraph.

\medskip

{\bf Iteration: the $2$-coboundary.} The orbit-stabiliser calculation of $\Comp(H,M)$ on $\gamma+B^2(H,M)$ yields our chosen minimal class $\lambda+B^2(H,M)$, the stabiliser \begin{eqnarray}\label{eq_stab}S&=&{\rm Stab}_{\Comp(H,M)}(\gamma+B^2(H,M)),\end{eqnarray}and an element $(\mu,\nu)\in \Comp(H,M)$ such that $\delta_f=\nu\circ\gamma\circ(\mu^{-1},\mu^{-1})-\lambda$ is a $2$-coboundary. In \eqref{eq_cobf}, we need the values $f(h_1),\ldots,f(h_n)$ to construct the isomorphism $E(\gamma)\to E(\lambda)$. We do not know $f$, but we can compute values $\tilde f(h_1),\ldots,\tilde f(h_n)$ of some function $\tilde f\colon H\to M$ that satisfies $\delta_f=\delta_{\tilde f}$; this can be done  by solving a linear equation system (cf.\ \cite[p.\ 251]{HoltEick}), and these values can be used to construct the required isomorphism.

\medskip

{\bf Iteration: the next automorphism group.} We also need to compute the automorphism group of $\tilde H$, and our algorithm provides most of the ingredients that are necessary for that; this follows the approach of \cite{SmithPhD,Holt01}. For this we first recall how to construct generators for $\Aut(E(\lambda))$, and then use the isomorphism $E(\lambda)\to E(\tail(\lambda))=\tilde H$ to translate these to generators for $\Aut(\tilde H)$. Let $S$ be as in \eqref{eq_stab}. Generators for $\Aut(E(\lambda))$ come in two types. The first type can be constructed by taking automorphisms $(h,a)\to (\alpha(h),\beta(a)\iota(h))$ defined by compatible pairs $(\alpha,\beta)\in S$ stabilising $\lambda+B^2(H,M)$, where $\iota\colon H\to M$ is a map satisfying $\beta\circ\lambda\circ(\alpha^{-1},\alpha^{-1})-\lambda=\delta_\iota$. The second type can be constructed by taking automorphisms $(h,a)\to (h,a\kappa(h))$ for $\kappa\in Z^1(H,M)$, see \cite[Proposition 5.14]{DietrichEick}. Note that $(\mu,\nu)$ conjugates $S$ to ${\rm Stab}_{{{\Comp}}(H,M)}(\lambda+B^2(H,M))$ and that $Z^1(H,M)$ consists of all maps $\kappa\colon H\to M$ with $\kappa(1)=1$ and $\kappa(ab)=\kappa(a)^b\kappa(b)$ for all $a,b\in H$, see \cite[p.\ 74]{DietrichEick}; GAP provides  tools to construct $Z^1(H,M)$.

\section{Main bottlenecks and open research questions}\label{sec_bottlenecks}

The major bottlenecks of our implementation are the following tasks,  which are all related to orbit-stabiliser calculations and often depend on the rank of the module $M$, the size of $\Aut(H)$, and the dimension of the space $H^2(H,M)$.
\begin{items}
\item[(T1)] Computing the canonical module structure: this is an orbit calculation of  $\Aut(H)\times\Aut(M)$ on matrix tuples, see Remark \ref{remtup}.
\item[(T2)] Computing  $\Comp(H,M)$: this is a stabiliser calculation which can be carried out along the orbit calculation in (T1), see Remark \ref{rem_cp}.
\item[(T3)] Computing the canonical $2$-cocycle: this is an orbit calculation of the group $\Comp(H,M)$ acting on $H^2(H,M)$.
\end{items}

As described in Section \ref{sec_compiso}, in order to construct our isomorphism, in the orbit calculations during (T1) and (T3) we not only have to identify a minimal element in the orbit, but we also require a transversal element that maps the given element to the minimal one. Often we also seek a generating set for the stabiliser; this is relevant, for example, in (T2) or for the computation of the automorphism group.

Each of these tasks (T1), (T2), and (T3) is an interesting and challenging research problem in its own right; e.g., recall that (T1) encompasses the matrix tuple equivalence problem \cite{cantup}, see Remark \ref{remtup}. We comment on (T1) and (T2) in Section \ref{sec_canmods}, and on (T3) in Section \ref{sec_can2cs}. We highlight these problems here to indicate scope for future research; any progress on these problems will be incorporated in future versions of our implementation \cite{BADH}. Another opportunity for future improvements is to exploit the characteristic structure of groups; this is already used for computing automorphism groups of finite $p$-groups, but not yet for isomorphism construction; see also the recent work \cite{forum}.

We note that GAP already provides some advanced functionality for computing with large orbits, see for example the GAP package Orb \cite{orb}. Also relevant is an algorithm of  Schwingel \cite{schwingel} that allows one to construct a canonical $G$-orbit representative for a $p$-subgroup $G\leq {\rm GL}_d(p)$ acting on the natural module, including stabiliser and transversal element; see \cite[Section 5.2]{autpgroup} for details. In the introduction we mentioned the work \cite{orbit} that discusses canonical elements in orbits; it  assumes permutation groups as acting groups, and therefore is not directly applicable to our situation.

\subsection{Computing the canonical module structure and compatible pairs}\label{sec_canmods}
The module structure $\varepsilon\colon H\to\Aut(M)$ is a  homomorphism on the canonical group $H=\langle x_1,\ldots,x_m\mid \mathcal{R}\rangle$, so determined by the tuple $T=(\varepsilon(x_1),\ldots,\varepsilon(x_m))$. If all $\varepsilon(x_i)$ are scalar matrices, then we define $\can_M(T)=T$. Otherwise, let $j\geq 1$ be the first index such that $\varepsilon(x_j)$ is not scalar. We then define $\can_M(T)$ as  the tuple $(M_1,\ldots,M_m)$, where $M_i=\varepsilon(x_i)$ for $i=1,\ldots,j-1$, $M_j=B^{-1}\varepsilon(x_j)B$ is the Frobenius Normal Form of $\varepsilon(x_j)$, and $(M_{j+1},\ldots,M_m)$ is the unique minimal element in the $C_{\Aut(M)}(M_j)$-orbit of $(B^{-1}\varepsilon(x_{j+1})B,\ldots,B^{-1}\varepsilon(x_m)B)$; the latter can also be constructed by an iteration over the entries of that tuple, but we skip these details here. This computation also allows us to construct generators of the stabiliser, which yields a generating set for the compatible pairs, see (T2).

\subsection{Computing the canonical $2$-cocycle}\label{sec_can2cs}
Here we consider the action of $\Comp(H,M)$ on the cohomology group $H^2(H,M)$. GAP already provides functionality to compute this action via a matrix representation of $\Comp(H,M)$, and this computation can be made more efficient by employing various improvements. For example, one can compute and exploit the structure of a composition series of the module, or of a composition series of the group (and apply the aforementioned algorithm of Schwingel to a normal $p$-subgroup); we refer to  \cite{Schanze} for a further discussion of this problem. Here we discuss another simplification, namely, for the case that the $H$-module structure on $M$ is trivial.

\enlargethispage{0.5cm}

\subsubsection{Trivial module structure: a reduction using linear algebra}\label{sec_lared}
Suppose  $M$ is a trivial $H$-module, so ${\Comp}(H,M)=\Aut(H)\times \Aut(M)$. In (T3), instead of enumerating the union of the $\Comp(H,M)$-orbit of $t + \mathcal{B}(H,M)$, we exploit that the action of $\Aut(M) = {\rm GL}_d(p)$ is well-understood: we now describe how to solve (T3) by acting with $\Aut(H)$ on canonical $\Aut(M)$-orbit representatives.

Let $t$ be a tail vector with entries $t_1,\ldots,t_m\in \Z_p^d$. The action of $A\in \Aut(M)$ on $t$ is by component-wise multiplication $t_i\mapsto t_iA$. Thus, if we consider $t$ as a matrix with rows $t_1,\ldots,t_m$, the task is to define a canonical element in the set $t\cdot \Aut(M)$. For this we determine integers $i_1<\ldots<i_r$, each as small as possible, such that the set $\{t_{i_1},\ldots,t_{i_r}\}$ spans the row space of $t$, where $r$ is the rank of $t$. We then extend this set deterministically to a basis of $\mathbb{Z}_p^d$ (for example, by adding standard basis vectors $e_{j_1},\ldots,e_{j_{d-r}}$ with $j_1<\dots<j_{d-r}$, each as small as possible) and let $A$ be the inverse of the matrix that has this basis as row vectors. Now define $\can_M(t)$ as the tail vector represented by the matrix $tA$; we also call it the $M$-canonical tail vector of $t$. Note that $tA$ has the property that it has the $s$-th standard vector $e_s$ in row $i_s$ for $s=1,\ldots,r$. The next lemma shows that $\can_M(t)$ is indeed  a canonical element in the set $t\cdot \Aut(M)$.

\begin{lemma}
Two tail vectors $t$ and $t'$ lie in the same $\Aut(M)$-orbit if and only if $\can_M(t)=\can_M(t')$.
\end{lemma}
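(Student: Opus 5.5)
The lemma says that $\can_M$ is a complete invariant for the right action of ${\rm GL}_d(p)$ on $m\times d$ matrices; I would prove the two implications separately.

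\emph{($\Leftarrow$).} This direction is immediate. By construction $\can_M(t)=tA$ with $A\in{\rm GL}_d(p)=\Aut(M)$, so $\can_M(t)$ lies in the orbit $t\cdot\Aut(M)$. Likewise $\can_M(t')$ lies in $t'\cdot\Aut(M)$. Hence $\can_M(t)=\can_M(t')$ forces the two orbits to meet, and so they coincide.

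\emph{($\Rightarrow$).} Suppose $t'=tB$ with $B$ invertible. The key observation is that right multiplication by $B$ preserves every linear relation among the rows: $\sum_i c_it_i=0$ holds if and only if $\sum_i c_it_iB=0$. From this I would deduce two facts.
\begin{itemize}
\item The greedy choice of indices is the same for $t$ and $t'$. The index $i$ is selected exactly when $t_i\notin\langle t_1,\ldots,t_{i-1}\rangle$, and this condition is invariant under $B$. So both produce $i_1<\dots<i_r$, and the rank $r$ agrees.
\item Every row of $t$ is a unique linear combination $t_i=\sum_{s=1}^r c_{i,s}t_{i_s}$ of the pivot rows. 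The same coefficients $c_{i,s}$ express $t'_i$ in terms of $t'_{i_s}$.
\end{itemize}

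It then remains to identify $\can_M(t)$ from these coefficients. Let $A$ be the inverse of the matrix whose first $r$ rows are $t_{i_1},\dots,t_{i_r}$, followed by the completing standard basis vectors. Then $t_{i_s}A=e_s$ for $s\le r$. Consequently row $i$ of $tA$ equals $\sum_s c_{i,s}e_s$. This depends only on the coefficients $c_{i,s}$ and not on the completion vectors, since those vectors only affect the images of the $e_j$ with $j>r$, and no row of $t$ involves them. The same computation applied to $t'$ gives the same rows, so $tA=t'A'$, that is, $\can_M(t)=\can_M(t')$.

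The main point of care is the dependence on the completion to a basis: the completing vectors chosen for $t$ and for $t'$ may differ, because the row spaces differ. I would handle this by the remark above that $tA$ lies entirely in the span of $e_1,\ldots,e_r$ and is determined by the $c_{i,s}$ alone, so the choice of completion does not matter.
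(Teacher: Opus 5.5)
Your proof is correct and takes essentially the same route as the paper: the two arguments agree in substance in both directions, showing that the pivot indices $i_1<\dots<i_r$ agree for $t$ and $t'=tB$ and that the canonical form is then forced by having $e_s$ in row $i_s$. The difference is only in presentation: you make the last step explicit through the coefficients $c_{i,s}$, which also shows directly why the completion vectors are irrelevant, whereas the paper observes that the element $B$ with $\can_M(t')=\can_M(t)B$ fixes $e_1,\ldots,e_r$ and hence the whole row space of $\can_M(t)$.
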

\begin{proof}
  By construction, if $\can_M(t)=\can_M(t')$, then $tA=t' A'$ for some $A,A'\in {\rm GL}_d(p)$, and therefore  $t=t' A' A^{-1}$ and $t'$ lie in the same $\Aut(M)$-orbit.

  For the converse, suppose $t' = t A$ for some $A\in \Aut(M)$.  Let $i_1<\ldots<i_r$ be as in the construction of $\can_M(t)$. Since $A$ is invertible, the vectors $t_{i_1}A,\ldots,t_{i_r}A$ span the row space of $tA$, hence of $t'$. By the same argument, if $i'_1<\ldots<i'_r$ are as in the construction of $\can_M(t')$, then  $t_{i'_1}'A^{-1},\ldots,t_{i'_r}'A^{-1}$ forms a basis of the row space of $t$. By the minimality of the labels $i_1,\ldots,i_r$ and $i'_1,\ldots,i'_r$, we must have $i'_s = i_s$ for all $s=1,\ldots,r$. Thus, $\can_M(t')$ and $\can_M(t)$ both have $e_{1},\ldots,e_{r}$ in rows $i_1,\ldots,i_r$. Note that $\can_M(t)$ and $\can_M(t')$ lie in the $\Aut(M)$-orbit of $t$, so  there is $B\in \Aut(M)$ with $\can_M(t')=\can_M(t)B$. Since $\can_M(t')$ and $\can_M(t)$ have the same rows $i_1,\ldots,i_r$, namely, the standard basis vectors $e_1,\ldots,e_r$, it follows that $B$ fixes this basis of the row space of $\can_M(t)$ element-wise. This implies $\can_M(t')=\can_M(t)B=\can_M(t)$, as claimed.
\end{proof}

The $(\Aut(H)\times\Aut(M))$-action on tail vectors can now be reduced to an action of $\Aut(H)$ on the set of $M$-canonical tail vectors:  given $t'=\can_M(t)$ and an automorphism $\alpha\in \Aut(H)$, we act with $(\alpha,1)\in{\Comp}(H,M)$ on $t'$ as usual to obtain a tail vector $t''$, and then return $\can_M(t'')$ as the image of $t'$ under $\alpha$.

As mentioned in Section \ref{sec_compiso}, in practice we act not on tail vectors but on a space that represents $H^2(H,M)$. We  compute the $M$-canonical vector of $t\in H^2(H,M)$ as $\pi(\can_M(\pi^{-1}(t)))$, where $\pi\colon \mathcal{Z}(H,M)\to H^2(H,M)$ is the natural epimorphism. This is possible because in GAP the preimage function of $\pi$ is deterministic.

\section{Implementation and examples} \label{sec_comp}
We have implemented our algorithm for the computer algebra system GAP \cite{gap} in the package {\small\tt CanonicalPcPres} \cite{BADH}. In practice, for our canonical filtration we always impose that the first subgroup $G_2$ is the nilpotent residual of $G$, that is, the unique smallest normal subgroup of $G$ with nilpotent quotient $G/G_2$. Since every finite nilpotent group is the direct product of its Sylow subgroups, this allows us to define and compute $\can(G/G_2)$ by using the canonical presentations for groups of prime power order, see \cite{Obrien93}. Our current implementation therefore  focuses on the general solvable case and provides  the framework for constructing  canonical extensions for not necessarily trivial module structure.

Before we present some examples, we need to set the expectations regarding runtimes. (All runtimes presented below have been obtained on a computer with an Apple M4 CPU and 32GB RAM, using  GAP 4.15.1.) Even the highly specialised  ANUPQ algorithms with more than 22000 lines of C code \cite[p.\ 3]{pq} currently require about 24 seconds to compute the standard presentation of the group $C_{11}\ltimes (C_{11}\ltimes (C_{11}\times C_{11^2}))$ with IdSmallGroup $[11^5,86]$; this  has been computed via the command {\footnotesize{\tt EpimorphismStandardPresentation}} provided by \cite{anupq}. (We note that more bespoke and efficient algorithms for dealing with groups of order dividing $p^5$ are currently developed in  \cite{Schanze}; this is another reason why in our example computations below we focus on non-nilpotent groups.) The difficulty of computing canonical presentations is not so much determined by the size of the group, but heavily influenced by the largest  module rank and, consequently, orbit sizes. Our algorithm faces the same challenges, and so there are already \emph{small} orders where our algorithm requires a long time, but there are also \emph{large} orders where we can compute the canonical presentation quickly; see Section \ref{sec_examples} for explicit runtimes.

The main purpose of our implementation is to provide the necessary framework for  our algorithm; if more efficient solutions and implementations for the various bottlenecks become available, then these will be incorporated and made public via the online repository \cite{BADH} of our package.

We conclude with a brief remark on the natural question whether our algorithm makes any advances for the complexity of group isomorphism testing. The short answer is `no'.

\begin{remark}
  Investigating the complexity of group isomorphism testing is a famous problem in group theory and theoretical computer science. While in practice one is interested in algorithms that run in time polynomial in the \emph{input size} (usually a generating set of size $O(\log n)$ for groups of size $n$), it is not even known whether the isomorphism problem can be solved in time polynomial in the \emph{group size}; we refer to \cite{DietrichWilson} for some recent results and references. The main bottleneck  is expected to be isomorphism testing of finite $p$-groups; indeed, not even the case of $p$-groups of class $2$ is known to be polynomial in the group size, see \cite{qiao} and the references therein. The algorithm for canonical presentations of $p$-groups in \cite{Obrien93} does not improve this complexity, so neither does our algorithm.
\end{remark}

\newcommand{\myisom}{IsomorphismCanonicalPcGroup}
\newcommand{\mygrp}{CanonicalPcGroup}

\subsection{Examples}\label{sec_examples}
The command {\small\tt \myisom} provided by our GAP package takes as input a finite solvable group $G$ given by a polycyclic presentation (a GAP object of type {\small\tt pc\!\!\! group}) and returns an isomorphism $G\to \can(G)$ whose image is the canonical presentation ({\small\tt pc\!\!\! group}) of $G$; the function {\small\tt CanonicalPcGroup} only returns $\can(G)$.

In Figure \ref{code_small} we consider two `small' groups; the first has IdSmallGroup $[1800,430]$, and it takes a few milliseconds to compute the canonical presentation and isomorphism. To compare canonical presentations efficiently, we use the GAP function {\small\tt CodePcGroup} to encode each presentation by an integer (\emph{code}): the code together with the order of the group uniquely determines a polycyclic presentation of that group; we refer to \cite[Section~3.3]{codeBE} for more details. In Figure \ref{code_small}, we compute $1000$ `random' copies of the group and illustrate that the canonical presentations are indeed all the same. Here and below we construct these random copies with the GAP command {\small\tt RandomSpecialPcgsCoded}: it returns a group given by a \emph{random} special polycyclic generating set \cite{specialpcgs}, which provides particularly efficient group arithmetic.

The second group considered in Figure \ref{code_small} has IdSmallGroup $[1296,3583]$. It has a large-rank elementary abelian normal subgroup, and here it takes about 7 seconds to compute the canonical presentation and isomorphism. (While this is slower than hoped for, compare it with the performance of \cite{Obrien93} for the group with Id\-SmallGroup $[11^5,86]$; see above.) We present this example to illustrate that the bottlenecks are real. However, the examples below show that our implementation works very well in some other cases.

We used the SmallGroups Library to run over all solvable (but non-nilpotent) groups of size at most $2000$ whose size is not divisible by any $p^5$, $p$ a prime. There are $33450$ such groups, and computing the canonical presentation and isomorphism takes on average less than 40ms. The most challenging size is $2^4.3^4$, see the group in  Figure \ref{code_small}; on average each of the $3399$ solvable but non-nilpotent groups of size $2^4.3^4$ can be done in 82ms. We also considered all solvable cubefree groups of size between $2001$ and $50000$; these groups are provided by the SmallGroups Library. There are $386623$ such groups, and computing the canonical presentation and isomorphism takes on average less than 30ms. The longest runtime was $3$ seconds for the group with IdSmallGroup $[ 36300, 489 ]$; however, on average, each of the $968$ solvable groups of size $36300$ is done in 70ms.

In Figure \ref{code_cf1}, we run over the $3087$ solvable groups of cubefree size $44100$. The cubefree groups of size at most $50000$ are part of the SmallGroups Library. It took our code $219$ seconds to compute all canonical presentations and isomorphisms (on average, 71ms per group). GAP's {\small\tt IdSmallGroup} required $105$ seconds to identify these $3087$ groups, but our algorithm does more as it also provides explicit isomorphisms. GAP's {\small\tt IsomorphismGroups} required close to $6$ hours to perform these computations. The  isomorphism algorithm for cubefree groups provided by the package Cubefree (see \cite{cfisom}) required $113$ seconds to construct isomorphisms, but it does not provide a canonical labelling. In this example, our proof-of-concept implementation is already on par with {\small\tt IdSmallGroup} and the bespoke command {\small\tt IsomorphismCubefreeGroups}.

In Figure \ref{code_large}, we consider three larger groups $G$, $K$, and $L$, each defined by their code and size. The group $G$ has cubefree size $2375789332049525700$ and derived length $2$; our implementation required less than one second to compute the canonical presentation and isomorphism. The group $K$ has size  $2^7.3^5.5^4.7.11$ and derived length $4$; our implementation took  about $3$ seconds to compute the canonical presentation and isomorphism. The group $L$ has derived length $5$ and size $2^{10}.3^7.5^2$; here our implementation took about $6$ seconds.

\begin{figure}[htpb]
\begin{Verbatim}[fontfamily=qcr, fontsize=\fontsize{8}{9}\selectfont,commandchars=\\\{\}]
\mygp{\bf G} := SmallGroup(1800,430);;  # 1800 = 2^3.3^2.5^2; 749 groups
\mygp StructureDescription(G);
\mygpo{"C3 x (C3 : ((C5 x C5) : (C4 x C2)))"}
\mygp isoG := {\myisom}(G); time;
\mygpo{[ f1*f2,f3,f2*f4,f4,f5,f6^2*f7^3,f6^2*f7^2 ] -> [ f1,f2,f3,f4,f5,f6,f7 ] )}
\mygpo{{56} # runtime in ms}
\mygp ### compute 1000 random copies and compare the canonical presentations
\mygp code := CodePcGroup(\mygrp(G));
\mygpo{374453170123379004775891797023}
\mygp grps := List([1..1000],
                   x -> PcGroupCode(RandomSpecialPcgsCoded(G),Size(G)));;
\mygp ForAll(grps, H -> code = CodePcGroup( \mygrp(H) ) );
\mygpo{true}
\mygp ### a small order where the bottlenecks are visible
\mygp {\bf K} := SmallGroup(1296,3583);; # 1296 = 2^4.3^4; 3609 groups
\mygp StructureDescription(K);
\mygpo{"C2 x (((C3 x C3) : C2) x S3 x S3)"}
\mygp {\mygrp}(K);; time;
\mygpo{{6661} # runtime in ms}
\end{Verbatim}
\caption{Example computation for two small groups}\label{code_small}
 \end{figure}

\begin{figure}[htpb]
\begin{Verbatim}[fontfamily=qcr, fontsize=\fontsize{8}{9}\selectfont,commandchars=\\\{\}]
\mygp all  := AllSmallGroups(44100,IsSolvableGroup);; Size(all);
\mygpo{3087 # there are 3087 solvable groups of size 44100 = 2^2.3^2.5^2.7^2}
\mygp ### create two identical lists of copies of these 3087 groups
\mygp {\bf grps} := List(all, x->PcGroupCode(RandomSpecialPcgsCoded(x),Size(x)));;
\mygp {\bf grps2}:= List(grps,x->PcGroupCode(CodePcGroup(x),Size(x)));;
\mygp List(grps, \mygrp);;time;
\mygpo{219390 # total runtime in ms using our method}
\mygp 219390/3087.;
\mygpo{{71.069} # average runtime per group in ms}

\mygp ### compare against existing functionality
\mygp ids := List(grps2, IdSmallGroup);;time;
\mygpo{{105170} # total runtime in ms using IdSmallGroup}

\mygp iso := List([1..Size(all)],
     i -> IsomorphismGroups(grps2[i],SmallGroup(ids[i])));; time;
\mygpo{{21162593} # total runtime in ms (5+ hours) using IsomorphismGroups}

\mygp isocf := List([1..Size(all)],
     i -> IsomorphismCubefreeGroups(grps2[i], SmallGroup(ids[i]));; time;
\mygpo{{113647} # total runtime in ms using IsomorphismCubefreeGroups}
\end{Verbatim}
\caption{Example computation for solvable groups of size $44100$}\label{code_cf1}
 \end{figure}

\begin{figure}[htpb]
\begin{Verbatim}[fontfamily=qcr, fontsize=\fontsize{8}{9}\selectfont,commandchars=\\\{\}]
\mygp{\bf G}:=PcGroupCode(3061760908318575786777401223340795845627611568558036018
754144322048690618653729499043969075561463552897604777321723599477407881360
502026288875744307018177675018562730812484030738208528249788355944888134127
92152406292181068000009150915011717294501050327990305,2375789332049525700);
\mygpo{<{\bf pc group of size 2375789332049525700} with 18 generators>}
\mygp ### 2375789332049525700 = 2^2.3^2.5^2.7^2.11^2.17^2.19^2.29.31.47.101
\mygp {### derived series section: C66671 x C19, C937750624650 x C2}
\mygp \mygrp(G);;time;
\mygpo{891 # runtime in ms}

\mygp{\bf K} := PcGroupCode(64041269144971972988463840467065174232558746859226343
976022739389867965931214626136977800164680906884623405344757194855557070302
975691696199119716934460115513212906153279934567711709282129828294532859104
127290442912769971452937809379342675923063057026989555059271001159000131100
333, 1496880000);
\mygpo{<{\bf pc group of size 1496880000} with 18 generators>} 
\mygp {### 1496880000 = 2^7.3^5.5^4.7.11}
\mygp {### derived series sections: C924 x C6, C270 x C2, C10^2, C5}
\mygp \mygrp(K);;time;
\mygpo{2888 # runtime in ms}

\mygp{\bf L} := PcGroupCode(351841997224853303820211227494417716166702805106186170
892240002799379603820385254089278106691847456555782306685606850709462562357
153692892027605512257112222201989307411079199922115884105318239958775875629
260453202577423723063788516901602113920381737658248445811387516224181677363
169722077856494109821308015683831098372915100590272704101327783998939074774
829886158201863370662651729102498460700090022221936040507232517936122980042
881887460321857390850440067582054212959850406560773218970981953554657802022
057028354674987919580546795699492221918565149052961411505052074461629544760
59807128552453150699885439653613171493495425130191736832, 55987200);
\mygpo{<{\bf pc group of size 55987200} with 19 generators>}
\mygp {### 55987200 = 2^10.3^7.5^2}
\mygp {### derived series sections: C2^2, C3, C10^2, C3^6, C2^6}
\mygp \mygrp(L);;time;
\mygpo{6250 # runtime in ms}
\end{Verbatim}
\caption{Example computations for three larger groups with derived lengths 2, 4, and 5, respectively}\label{code_large}
\end{figure}


\end{document}